\newcommand{\Eqref}[1]{Eq.\,\eqref{#1}}
\newtheorem{theorem}{Theorem}[section]
\newtheorem{corollary}[theorem]{Corollary}
\newtheorem{definition}[theorem]{Definition}
\newtheorem{lemma}[theorem]{Lemma}
\newtheorem{remark}[theorem]{Remark}
\newenvironment{proof}[1][Proof]{\noindent\textbf{#1.} }{\ \rule{0.5em}{0.5em}}
\newcommand{\E}{{\rm \bf E}}
\renewcommand{\P}{{\rm \bf P}}
\newcommand{\prob}{{\rm \bf P}}
\newcommand{\dN}{{\mathbb N}}
\newcommand{\N}{{\mathbb N}}
\newcommand{\dR}{{\mathbb R}}
\newcommand{\ep}{\varepsilon}
\newcommand{\black}{\color{black}}
\newcounter{figurecounter}
\begin{document}

\title{Repeated Games with Tail-Measurable Payoffs%
\thanks{
Solan acknowledges the support of the Israel Science Foundation, grant
\#217/17.}}

\author{J\'{a}nos Flesch\footnote{Department of Quantitative Economics, 
Maastricht University, P.O.Box 616, 6200 MD, The Netherlands. E-mail: j.flesch@maastrichtuniversity.nl.}
\and Eilon Solan\footnote{School of Mathematical Sciences, Tel-Aviv University, Tel-Aviv, Israel, 6997800, E-mail: eilons@tauex.tau.ac.il.}}

\maketitle

\begin{abstract}
We study multiplayer Blackwell games, which are repeated games where the payoff of each player is a bounded and Borel-measurable function of the infinite stream of actions played by the players during the game. These games are an extension of the two-player perfect-information games studied by David Gale and Frank Stewart (1953). Recently, various new ideas have been discovered to study Blackwell games. In this paper, we give an overview of these ideas by proving, in four different ways, that Blackwell games with a finite number of players, finite action sets, and tail-measurable payoffs admit an $\ep$-equilibrium, for all $\ep>0$.
\end{abstract}

\textbf{Keywords:} Repeated game, equilibrium, tail-measurable payoff.

\section{Introduction}\label{sec-intro}

In a seminal paper, David Gale and Frank Stewart (1953) studied two-player win-lose games with perfect information and infinite horizon, where player~1 wins if the realized play lies in a given set of plays,
and player 2 wins otherwise.
They proved that if the winning set of player~1 is open or closed in the product topology,
then the game is determined: one of the players has a winning strategy.
Along the years this result has been extended to larger classes of winning sets,
until Martin (1975) proved that the game is determined as soon as the winning set is Borel-measurable.
When the winning set is not Borel-measurable, the game is not always determined.

Blackwell (1969) studied analogous games where the two players move simultaneously:
in every stage both players simultaneously choose actions,
and player~1 wins if and only if the play is in some given winning set of plays.
These games are termed \emph{Blackwell games},
%\footnote{In the literature these games are also called \emph{concurrent games} and \emph{simultaneous-move games}.}
and in the language of game theory they are repeated games with general payoff functions.\footnote{The qualification ``general'' refers to the fact that the payoff is not necessarily the discounted sum of stage payoffs or the long-run average of stage payoffs.}
Blackwell (1969) proved that if the action sets are finite and the winning set is $G_\delta$ in the product topology, that is,
a countable intersection of open sets, then the game has a value.
The restriction to finite action sets is natural, since even in one-shot games
with countable action sets, the value need not exist.
Along the years, the value existence result has been extended to larger classes of winning sets,
until Martin (1998) extended it to all Borel-measurable sets, 
by reducing the existence problem to the determinacy of two-player win-lose alternating-move games.
In fact, Martin's (1998) result does not require that the outcome is determined by a winning set: as soon as the outcome is a bounded and Borel-measurable function of the play, the value exists, and as a consequence, each player has an $\ep$-optimal strategy for any error-term $\ep>0$. 

As the determinacy of two-player zero-sum games has been established,
it is natural to go beyond zero-sum games 
and ask whether an $\ep$-equilibrium exists in multiplayer Blackwell games with bounded and Borel-measurable payoff functions, for every $\ep > 0$. 
This is a challenging and largely uncharted question,%
\footnote{Note that multiplayer Blackwell games include stochastic games with deterministic transitions: in these games, the play is in one of the states in each stage, and the next state is determined, in a deterministic way, by the action combination chosen in the current state.} and only some special classes have been shown to have an $\ep$-equilibrium. These include multiplayer Blackwell games with
perfect information (Mertens and Neyman, see Mertens, 1987),%
\footnote{See also Le Roux and Pauly (2014). For the concept of subgame perfect $\ep$-equilibrium in perfect information games, we refer to Flesch and Predtetchinski (2017), Bruy\`ere, Le Roux, Pauly, Raskin~(2021), and the references therein.} with discounted payoffs (see, e.g., the survey by Ja\'{s}kiewicz and Nowak (2018)), with two-players and the long-run average payoff (Vieille (2000a,b)), and games with classical computer science objectives (see, e.g., Secchi and Sudderth (2002), Chatterjee, Majumdar, Jurdzinski (2004), Chatterjee (2005), and Bouyer, Brenguier, Markey, and  Ummels (2015)).  

Recently, various new ideas have been discovered to study multiplayer Blackwell games when the payoff functions of the players are tail-measurable. The condition of tail-\hyphenate{measurability} amounts to requiring that the payoffs are independent of the actions chosen in any finite number of stages. In this paper, we give an overview of these ideas, and we do so by proving, in four different ways, that multiplayer Blackwell games with finite action sets, and bounded, Borel-measurable and tail-measurable payoffs admit an $\ep$-equilibrium, for all $\ep>0$. 
The first proof applies to Blackwell games with two players, and 
the other proofs
apply to Blackwell games with finitely many players.
The third proof 
can even be extended 
to Blackwell games with countably many players. While the first proof is standard, and the second and 
third 
have been published, see Ashkenazi-Golan, Flesch, Predtetchinski, and Solan (2022a, 2022b), the 
fourth 
proof is new.

We hope that the variety of approaches taken to prove the result will attract researchers to study the colorful and fascinating area of Blackwell games,
and help advance the research in this area.

%%%%%%%%%%%%%%%%%%%%%%%%%%%%%%%%%%%%%%%%%%%%%%%%%%%%%%%%%%%%%%%%%%%
\section{Model and Main Result}\label{sec2}

Throughout the paper,
the set of natural numbers is denoted by $\N = \{1,2,\ldots\}$.

\subsection{The Model}
\label{section:model}

\begin{definition}\label{def-game}
A \emph{Blackwell game} is a triplet
$\Gamma = (I,(A_i)_{i \in I}, (f_i)_{i \in I})$, where
\begin{itemize}
\item $I$ is a nonempty and finite set of \emph{players}.
\item   $A_i$ is a nonempty and finite \emph{action set} for player~$i$, for each $i \in I$. Let $A = \prod_{i \in I} A_i$ denote the set of action profiles, and $A^\dN$ denote the set of \emph{plays}, i.e., infinite streams of action profiles. We endow the set $A^\dN$ of plays with the product topology $\mathcal{F}(A^\dN)$, where the action set $A_i$ of each player $i\in I$ has its natural discrete topology. 
\item   $f_i : A^\dN \to \dR$ is a bounded and Borel-measurable \emph{payoff function} for player~$i$, for each $i \in I$.
\end{itemize}
\end{definition}

The game is played in discrete time as follows. In each stage $t \in \dN$,
each player $i \in I$ selects an action $a_i^t \in A_i$, simultaneously with the other players. This induces an action profile $a^t=(a_i^t)_{i\in I}$ in stage $t$, which is observed by all players. Given the realized play $p = (a^1,a^2,\ldots)$, the payoff to each player~$i \in I$ is $f_i(p)$. \medskip

\noindent\textbf{Histories.} A \emph{history} is a finite sequence of elements of $A$, including the empty sequence $\emptyset$. 
The set of histories is denoted by $H = \bigcup_{t=0}^\infty A^t$,
where $A^t$ is the $t$-fold product of $A$. For a play $p \in A^\dN$ write $p^{\leq t}$ to denote the prefix of $p$ of length $t$. 

For two histories $h,h'\in H$ and a play $p\in A^\dN$, we denote the concatenation of $h$ and $h'$ by $hh'$ and that of $h$ and $p$ by $hp$.

Each history induces a subgame of $\Gamma$. Given $h\in H$, the \emph{subgame} that starts at $h$ is the game $\Gamma^h = (I,(A_i)_{i \in I}, (f_{i,h})_{i \in I})$, where $f_{i,h} = f_i \circ s_h$ and $s_h : A^{\dN} \to A^{\dN}$ is given by $p \mapsto hp$.\medskip

\noindent\textbf{Strategies.}
A \emph{mixed action} for player $i$ is a probability measure $x_i$ on $A_i$.
The set of mixed actions for player $i$ is denoted by $X_i$.
A \emph{mixed action profile} is a vector of mixed actions $x = (x_i)_{i \in I}$, one for each player.
The set of mixed action profiles is denoted by $X$.

A (behavior) \emph{strategy} of player~$i$ is a function $\sigma_i : H \to X_i$. 
The interpretation is that when player~$i$ adopts the strategy $\sigma_i$,
at each
history $h\in H$ player~$i$ chooses 
an action according to the probability measure $\sigma_i(h)$. We denote by $\Sigma_i$ the set of strategies of player~$i$. 

A \emph{strategy profile} is a vector of strategies $\sigma = (\sigma_i)_{i \in I}$, one for each player. We denote by 
$\Sigma = \prod_{i \in I} \Sigma_i$ 
the set of strategy profiles.\medskip

\noindent\textbf{Definitions for the opponents of a player.}
For $i\in I$, let $-i$ denote the set of player $i$'s opponents, $I \setminus \{i\}$. 
When restricting attention to players in $-i$, 
we can similarly define mixed action profiles $x_{-i} = (x_j)_{j\neq i}$ and strategy profiles $\sigma_{-i} = (\sigma_j)_{j\neq i}$. The corresponding sets are denoted by $X_{-i}$ and $\Sigma_{-i}$, respectively.\medskip

\noindent\textbf{Expected payoffs.} Using Kolmogorov's extension theorem, each strategy profile $\sigma$ induces a unique probability measure $\mathbb{P}_\sigma$ on $(A^\dN,\mathcal{F} (A^\dN))$. Player $i$'s expected payoff under $\sigma$ is 
\[\E_{\sigma}[f_i]\,=\,\int_{A^\dN}f_i(p)\ \mathbb{P}_\sigma(dp),\]
and similarly, player $i$'s expected payoff under $\sigma$ in the subgame starting at a history $h$ is
\[\E_{\sigma}[f_i \mid h]\,=\,\int_{A^\dN}f_{i,h}(p)\ \mathbb{P}_{\sigma_h}(dp),\]
where $\sigma_h$ is the strategy profile defined as $\sigma_h(h')=\sigma(hh')$ for each history $h'$.\medskip

\noindent\textbf{Equilibrium.} Let $\ep\geq 0$. A strategy profile $\sigma^*$ is called an \emph{$\ep$-equilibrium}, if for every player $i \in I$ and every strategy $\sigma_i \in \Sigma_i$,
we have $\E_{\sigma^*}[f_i] \,\geq\, \E_{\sigma_i,\sigma^*_{-i}}[f_i] - \ep$. This means that no player can gain more than $\ep$ by a unilateral deviation. \medskip

\noindent\textbf{Minmax value and maxmin value.} The minmax value  (also called the upper-value) is the highest payoff that a player can defend against any strategy profile of her opponents. Formally, player $i$'s \emph{minmax value} in the game is defined as
\begin{equation*}
\overline{v}_i\, =\, \inf_{\sigma_{-i} \in \Sigma_{-i}}\,\sup_{\sigma_i \in \Sigma_i}\E_{\sigma_i,\sigma_{-i}}[f_i],
\end{equation*}
and similarly, player $i$'s minmax value in the subgame starting at 
a history $h$ is
\begin{equation*}
\overline{v}_i(h)\, = \,\inf_{\sigma_{-i} \in \Sigma_{-i}}\,\sup_{\sigma_i \in \Sigma_i}\E_{\sigma_i,\sigma_{-i}}[f_i\mid h].
\end{equation*}
Note that if $\sigma^*$ is an $\ep$-equilibrium, then each player $i$'s payoff is at least her minmax value minus $\ep$, i.e., $\E_{\sigma^*}[f_i]\geq \overline{v}_i-\ep$.

The maxmix value (also called the lower-value) is the highest payoff that a player can guarantee to receive. Formally, player $i$'s \emph{maxmin value} in the game is defined as
\begin{equation*}
\underline{v}_i\, =\, \sup_{\sigma_i \in \Sigma_i}\,\inf_{\sigma_{-i} \in \Sigma_{-i}}\E_{\sigma_i,\sigma_{-i}}[f_i],
\end{equation*}
and similarly, player $i$'s maxmin value in the subgame starting at 
a history $h$ is
\begin{equation*}
\underline{v}_i(h)\, = \,\sup_{\sigma_i \in \Sigma_i}\,\inf_{\sigma_{-i} \in \Sigma_{-i}}\E_{\sigma_i,\sigma_{-i}}[f_i\mid h].
\end{equation*}

It follows by the definitions that $\overline{v}_i\geq \underline{v}_i$ and $\overline{v}_i(h)\geq \underline{v}_i(h)$ for each $h\in H$, and it is known that these inequalities can be strict.
In the special case of two-player Blackwell games, by Martin (1998), the minmax and the maxmin values are equal: for each player $i\in I=\{1,2\}$ we have $\overline{v}_i= \underline{v}_i$ and $\overline{v}_i(h)= \underline{v}_i(h)$ for each $h\in H$. In particular, if the two-player Blackwell game is zero-sum, i.e., $f_1=-f_2$, then $\overline{v}_1=\underline{v}_1=-\overline{v}_2=-\underline{v}_2$, and this common quantity is called the \emph{value} of the game.
\medskip
\black

\noindent\textbf{Tail-measurability.} A function $g:A^\dN\to\dR$ is called \emph{tail-measurable}, if whenever two plays $p = (a^1,a^2,\dots)$
and $p' = (a'^1,a'^2,\dots)$ satisfy $a^t = a'^t$ for every $t$ sufficiently large, then $g(p)=g(p')$. In other words, $g$ is tail-measurable if changing finitely many coordinates in the play does not influence the value of $g$. Not every Borel-measurable function is tail-measurable, and not every tail-measurable function is Borel-measurable, see Rosenthal (1975) and Blackwell and Diaconis (1996). 
Various important evaluation functions in the literature are tail-measurable, for example, the limsup and the liminf of the average stage payoffs (e.g., Sorin (1992)), the limsup and the liminf payoffs (e.g., Maitra and Sudderth (1993)). Various classical winning conditions in the computer science literature, such as the Büchi, co-Büchi, parity, Streett, and Müller  (e.g., Gimbert and Horn (2008), and Chatterjee and Henzinger (2012)) are also tail-measurable.
The discounted payoff (e.g., Shapley (1953)) is not tail-measurable.

One important consequence of tail-measurability is that the minmax value 
and the maxmin value are independent of the history. 

\begin{lemma}
\label{indepvalue}
Consider a Blackwell game $\Gamma$ and a player $i\in I$. If player $i$'s payoff function $f_i$ is  tail-measurable, then $\overline{v}_i=\overline{v}_i(h)$ 
and $\underline{v}_i=\underline{v}_i(h)$ 
hold for every history $h\in H$.
\end{lemma}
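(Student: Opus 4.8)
The plan is to reduce the statement to two facts: first, that tail-measurability forces the subgame payoff $f_{i,h}$ to depend on $h$ only through its length, so that $\overline{v}_i(h)$ and $\underline{v}_i(h)$ depend only on $|h|$; and second, a one-stage decomposition of the best-response value that lets me pass from histories of length $t$ to histories of length $t+1$. Throughout I treat the minmax value in detail, the maxmin case being symmetric with the roles of $\sup$ and $\inf$ (and of $\max$ and $\min$) interchanged.

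For the first fact I would fix $h,h'$ with $|h|=|h'|=t$. For every play $q\in A^\dN$, the plays $hq$ and $h'q$ agree in all coordinates after the $t$-th, so tail-measurability of $f_i$ gives $f_i(hq)=f_i(h'q)$, i.e.\ $f_{i,h}=f_{i,h'}$ as functions on $A^\dN$. Since, by the definition via $\E_\sigma[f_i\mid h]=\E_{\sigma_h}[f_{i,h}]$, the quantity $\overline{v}_i(h)$ is exactly the minmax value of the Blackwell game with payoff $f_{i,h}$, and this game depends on $h$ only through $f_{i,h}$, I conclude that $\overline{v}_i(h)$ and $\underline{v}_i(h)$ depend only on $t=|h|$. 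I write $u_t$ and $\ell_t$ for these common values, so that $u_0=\overline{v}_i$ and $\ell_0=\underline{v}_i$, and I record that each $f_{i,h}$ is again tail-measurable (prepending $h$ preserves agreement of tails), so the lemma applies verbatim to every subgame $\Gamma^h$.

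The core step is to show $u_0=u_1$; the general equality $u_t=u_0$ then follows by applying this comparison inside the subgame $\Gamma^h$ with $|h|=t$, which gives $u_{t}=u_{t+1}$, and telescoping. To compare $u_0$ and $u_1$ I decompose a best response by the first stage. Fixing $\sigma_{-i}$ with first-stage mixed action $x_{-i}=\sigma_{-i}(\emptyset)$ and continuations $\sigma_{-i,(a)}$, and writing $V_{\sigma_{-i}}(a)=\sup_{\tau_i}\E_{\tau_i,\sigma_{-i,(a)}}[f_{i,(a)}]$ for the best-response value in the subgame after the profile $a$, the independence of player $i$'s continuation choices across the disjoint one-stage histories yields $\sup_{\sigma_i}\E_{\sigma_i,\sigma_{-i}}[f_i]=\max_{a_i\in A_i}\sum_{a_{-i}}x_{-i}(a_{-i})\,V_{\sigma_{-i}}(a_i,a_{-i})$. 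Since $\inf_{\sigma_{-i,(a)}}V_{\sigma_{-i}}(a)=\overline{v}_i((a))=u_1$, I get $V_{\sigma_{-i}}(a)\ge u_1$ for every $\sigma_{-i}$, hence $\sup_{\sigma_i}\E\ge u_1$ and so $u_0\ge u_1$; conversely, choosing in each branch $a$ a continuation with $V_{\sigma_{-i}}(a)\le u_1+\ep$ produces a $\sigma_{-i}$ with $\sup_{\sigma_i}\E\le u_1+\ep$, whence $u_0\le u_1$. Letting $\ep\downarrow 0$ gives $u_0=u_1$.

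The hard part will be justifying the one-stage decomposition $\sup_{\sigma_i}\E_{\sigma_i,\sigma_{-i}}[f_i]=\max_{a_i}\sum_{a_{-i}}x_{-i}(a_{-i})V_{\sigma_{-i}}(a_i,a_{-i})$: one must argue that player $i$'s optimization over behavior strategies splits into an optimization over the first action together with mutually independent optimizations over the continuation strategies after each observed first-stage profile, and that, because the suprema need not be attained, the approximate ($+\ep$) continuations have to be selected branch by branch. Everything else — the tail-measurability bookkeeping of the first paragraph, the maxmin analogue, and the induction on $|h|$ — is routine once this decomposition is in place.
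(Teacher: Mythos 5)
The paper states Lemma~\ref{indepvalue} without proof, so there is nothing to compare against; judged on its own, your argument is correct and is essentially the canonical proof. The two pillars are both sound: (i) tail-measurability gives $f_{i,h}=f_{i,h'}$ whenever $|h|=|h'|$, so $\overline{v}_i(h)$ and $\underline{v}_i(h)$ depend only on $|h|$, and each $f_{i,h}$ is again tail-measurable; (ii) the one-stage decomposition yields $u_t=u_{t+1}$, and telescoping finishes the job. The decomposition you flag as the ``hard part'' is indeed the only place where care is needed, but it goes through exactly as you describe: $\E_{\sigma_i,\sigma_{-i}}[f_i]=\sum_{a\in A}x_i(a_i)x_{-i}(a_{-i})\,\E_{\sigma_{(a)}}[f_{i,(a)}]$, the continuation strategies after distinct first-stage profiles are chosen independently (so the supremum passes inside the finite sum with nonnegative weights, using near-optimal continuations branch by branch since suprema need not be attained), and the remaining optimization over $x_i$ is linear, hence attained at a pure action; the symmetric maxmin version additionally uses that the minimum of a multilinear function over the product of simplices $X_{-i}$ is attained at a pure profile $a_{-i}$. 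One point worth making explicit, since it is the reason the induction on $|h|$ is genuinely needed: the tempting shortcut of having the opponents ``replay from scratch'' after $h$ fails, because that would compare $f_i(hq)$ with $f_i(q)$, i.e., it would require shift-invariance rather than tail-measurability (the paper's Remark~\ref{rem-stoch} draws precisely this distinction). Your stage-by-stage comparison $u_t=u_{t+1}$, which only ever changes the single coordinate $a^{t+1}$ of the play, correctly avoids this trap.
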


\subsection{Main Theorem}

We now present the main theorem of the paper.

\begin{theorem}
\label{theorem:main}
Let $\Gamma = (I,(A_{i})_{i \in I},(f_{i})_{i \in I})$ be a Blackwell game, where the set $I$ of players is finite, the action set $A_i$ of each player $i\in I$ is finite, and the payoff function $f_i$ of each player $i\in I$ is bounded, Borel-measurable, and tail-measurable. Then, the Blackwell game $\Gamma$ admits an $\ep$-equilibrium, for every $\ep > 0$.
\end{theorem}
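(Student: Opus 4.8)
The plan is to construct an $\ep$-equilibrium by combining the existence of individual defense strategies (guaranteed by the two-player result of Martin) with a punishment mechanism that deters deviations, exploiting tail-measurability to make the construction history-independent. The starting point is Lemma~\ref{indepvalue}, which tells us that for each player~$i$ the minmax value $\overline{v}_i$ is the same in every subgame. For two-player zero-sum games Martin's theorem gives equality of minmax and maxmin values, so for each player~$i$ the opponents can jointly hold player~$i$ down to within $\ep$ of $\overline{v}_i$: treating $\{-i\}$ as a single adversary against player~$i$ in the zero-sum game with payoff $f_i$, there is a strategy profile $\tau^{-i} \in \Sigma_{-i}$ (a \emph{punishment} profile) such that $\sup_{\sigma_i}\E_{\sigma_i,\tau^{-i}}[f_i] \le \overline{v}_i + \ep$, and moreover this bound holds in every subgame because the value is history-independent.

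First I would fix a strategy profile that attains, up to $\ep$, a payoff vector that weakly Pareto-dominates the minmax vector. The natural candidate is a profile $\sigma^\dagger$ whose expected payoff $\E_{\sigma^\dagger}[f_i]$ is at least $\overline{v}_i - \ep$ for every player simultaneously; the existence of such a jointly-feasible, individually-rational payoff is where the main work lies, and I would obtain it by a minmax-type argument or by selecting a play (or distribution over plays) realizing high payoffs for all players at once. Next I would build the equilibrium $\sigma^*$ as a \emph{trigger strategy}: the players follow $\sigma^\dagger$ as long as no unilateral deviation is detected, and as soon as some player~$i$ is seen to deviate from the prescribed (mixed) action, all remaining players switch to the punishment profile $\tau^{-i}$ against the deviator for the rest of the game.

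The verification that $\sigma^*$ is an $\ep$-equilibrium has two ingredients. On the equilibrium path the payoff is $\E_{\sigma^\dagger}[f_i]\ge \overline{v}_i-\ep$. If player~$i$ deviates, detection triggers punishment, after which by the choice of $\tau^{-i}$ and the history-independence of $\overline{v}_i$, player~$i$ cannot obtain more than $\overline{v}_i+\ep$ in the ensuing subgame, regardless of when the deviation occurs. The delicate point is that payoffs are general Borel functions of the whole play, so a deviation in a single stage, or switching punishment at a random detection time, must not perturb the payoff by more than a controlled amount; here tail-measurability is essential, since it guarantees that the payoff is unaffected by the finitely many stages before punishment begins, so the post-deviation continuation value is exactly the subgame minmax value $\overline{v}_i$.

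The hard part, and the reason several distinct proofs exist, is handling the detection of deviations under \emph{mixed} actions: because players randomize, a one-shot deviation may be statistically indistinguishable from honest play, so the trigger cannot rely on observing a forbidden action. The standard device is to use statistical tests on the empirical frequencies of realized actions over blocks of stages, accepting an arbitrarily small probability of false punishment; tail-measurability again rescues the argument, since the occasional erroneous punishment affects only finitely many early stages of an otherwise honest play and hence does not change the payoff. I expect the bulk of the technical effort to go into choosing the block lengths and tolerance thresholds so that undetected profitable deviations are worth at most $\ep$ while falsely triggered punishments occur with negligible probability, and into verifying measurability of the resulting strategies.
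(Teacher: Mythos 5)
Your proposal correctly reproduces the \emph{easy} half of the argument, which is the paper's Lemma~\ref{suff-cond}: once you have a play (or profile) giving every player at least $\overline{v}_i-\ep$, trigger strategies with minmax-level punishment yield an $\ep$-equilibrium, and Lemma~\ref{indepvalue} (history-independence of $\overline{v}_i$ under tail-measurability) makes the punishment effective at every stage. The genuine gap is that you assert, but do not prove, the existence of the profile $\sigma^\dagger$ with $\E_{\sigma^\dagger}[f_i]\ge\overline{v}_i-\ep$ for \emph{all} players simultaneously. This is precisely where the entire difficulty of the theorem lies and where all four of the paper's proofs concentrate. A ``minmax-type argument'' does not deliver it: for each player $i$ separately the set $W_i=\{p: f_i(p)\ge \overline{v}_i-\ep/2\}$ can be defended by player $i$ (so $\overline{v}_i(W_i)=1$), but nothing elementary forces $\bigcap_i W_i\neq\emptyset$. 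The paper obtains this intersection either via subgame-perfect $\delta$-maxmin strategies plus L\'evy's zero-one law (two players only, since for more players $\underline{v}_i<\overline{v}_i$ is possible), via inner-regularity of the minmax value and a compactness argument (Theorem~\ref{theorem-reg}), or via the existence of $\delta$-minmax acceptable strategy profiles (Theorem~\ref{theorem:good:strategy}), which rests on a nontrivial extension of Martin's construction. Without one of these inputs your proof does not go through.

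A secondary problem is your treatment of deviation detection. If $\sigma^\dagger$ is taken to be a pure play $p^*$ (as in Lemma~\ref{suff-cond}), detection is trivial and the statistical-test machinery is unnecessary. If instead the equilibrium path genuinely mixes, your claim that an erroneous punishment ``affects only finitely many early stages'' is false: switching permanently to a punishment profile changes the entire infinite tail of the play, so tail-measurability does not neutralize false triggers. The paper's fourth proof handles mixing differently, by invoking the deviator-identification theorem of Alon, Gunby, He, Shmaya, and Solan (Theorem~\ref{theorem:blame}) to bound the \emph{probability} of punishing the wrong player, rather than by arguing that the payoff is unaffected.
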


In the following sections, we will provide four different proofs for Theorem \ref{theorem:main}. The first proof is standard and it applies to Blackwell games with two players, the second and 
fourth 
proofs apply to Blackwell games with finitely many players, while the
third proof 
even applies to 
Blackwell games with 
countably many players. 

%For the proof of Theorem \ref{theorem:main}, we may assume without loss of generality that each player $i$'s payoff function $f_i$ has finite range: $f_i(A^\dN)$ is a finite set. Indeed, we can round $f_i(p)$ to the largest multiple of $\ep$ smaller than or equal to $f_i(p)$, for each play $p\in A^\dN$. This transformation does not affect the tail-measurability of the function $f_i$, and any $\ep$-equilibrium in the game with payoffs rounded down is a $3\ep$-equilibrium of the original game.

Fix a Blackwell game $\Gamma$ that satisfies the conditions of Theorem \ref{theorem:main}, and fix $\ep>0$ for the rest of the paper. %We can assume without loss of generality that $\ep>0$ is small in the following sense: for any $i\in I$ and any $z_i\in f_i(A^\dN)$ we have $[z_i-\ep,z_i+\ep]\cap f_i(A^\dN)=\{z_i\}$; that is, the elements of the range of $f_i$ have distance more than $\ep$.

The following lemma presents a useful sufficient condition for the existence of an $\ep$-equilibrium. This condition is well known, but for completeness we provide a proof.

\begin{lemma}
\label{suff-cond}
Assume that there is a play $p^*\in A^\dN$ such that $f_i(p^*)> \overline{v}_i-\ep$ for each player $i\in I$. Then, $\Gamma$ admits an $\ep$-equilibrium.
\end{lemma}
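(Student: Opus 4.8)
The plan is to build a \emph{grim-trigger} profile $\sigma^*$: the players cooperate by following the play $p^*$ stage by stage, and as soon as one player deviates, the remaining players switch to punishing the deviator by holding her down to (approximately) her minmax value forever after. Because all actions are observed and the equilibrium condition only asks us to deter \emph{unilateral} deviations, the identity of a single deviator is always revealed by the observed action profile, which is what makes the trigger implementable.

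First I would fix the error parameter and the punishment. Since each $f_i(p^*) > \overline{v}_i - \ep$, the quantity $\ep' := \tfrac12 \min_{i\in I}\big(f_i(p^*) + \ep - \overline{v}_i\big)$ is strictly positive, using that $I$ is finite. Next, for every player $i\in I$ and every history $h\in H$, the definition of $\overline{v}_i(h)$ as an infimum over $\Sigma_{-i}$ provides a punishment profile $\tau^{i,h}_{-i}\in\Sigma_{-i}$ in the subgame $\Gamma^h$ with $\sup_{\sigma_i\in\Sigma_i}\E_{\sigma_i,\tau^{i,h}_{-i}}[f_i\mid h] < \overline{v}_i(h)+\ep'$; and by Lemma~\ref{indepvalue} we have $\overline{v}_i(h)=\overline{v}_i$ for every $h$, so this bound is uniformly $\overline{v}_i+\ep'$ no matter when the deviation is detected. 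I would then define $\sigma^*$ by: at every history that is a prefix of $p^*$, all players play the action prescribed by $p^*$; at the first history $h$ at which the realized profile differs from $p^*$, the unique coordinate in which it differs identifies the deviator $i$, and from then on the players in $-i$ follow $\tau^{i,h}_{-i}$ in $\Gamma^h$ (on histories not consistent with a single deviation, $\sigma^*$ may be defined arbitrarily, as they are never reached when checking unilateral deviations). Under $\sigma^*$ the realized play is exactly $p^*$, hence $\E_{\sigma^*}[f_i]=f_i(p^*)$ for every $i$.

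To verify the equilibrium property, fix a player $i$ and an arbitrary $\sigma_i\in\Sigma_i$, and let $\tau$ be the first stage at which player $i$'s action differs from the one prescribed by $p^*$. On the event $\{\tau=\infty\}$ the play is $p^*$ and player $i$ obtains $f_i(p^*)$. On each event $\{\tau=t\}$, the opponents detect the deviation at the resulting history $h$ and switch to $\tau^{i,h}_{-i}$; since $f_i$ is tail-measurable, the total payoff equals $f_{i,h}$ of the continuation, which by the choice of $\tau^{i,h}_{-i}$ is at most $\overline{v}_i+\ep'$ regardless of how $\sigma_i$ behaves afterwards. Averaging over the two cases gives $\E_{\sigma_i,\sigma^*_{-i}}[f_i] \le \max\{f_i(p^*),\,\overline{v}_i+\ep'\}$. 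By the choice of $\ep'$ we have $\overline{v}_i+\ep' \le f_i(p^*)+\ep$, so in either case $\E_{\sigma_i,\sigma^*_{-i}}[f_i] \le f_i(p^*)+\ep = \E_{\sigma^*}[f_i]+\ep$, which is exactly the $\ep$-equilibrium inequality.

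The step I expect to be the crux is the punishment of \emph{late} deviations: in a general Blackwell game the minmax value of a subgame can differ wildly from $\overline{v}_i$, so a player who deviates ``near the end'' might face a subgame in which she can no longer be held down, destroying the argument. Lemma~\ref{indepvalue}, which is where tail-measurability is used, removes precisely this difficulty by guaranteeing $\overline{v}_i(h)=\overline{v}_i$ for all $h$, so a uniform punishment bound is available at every detection history. A secondary point that needs care is the clean identification of the deviator; this is unproblematic here because only unilateral deviations must be deterred and actions are observed, so the single differing coordinate pins down the deviator unambiguously.
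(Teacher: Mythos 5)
Your proposal is correct and follows essentially the same route as the paper: a grim-trigger profile along $p^*$ with punishment at the (history-independent, by Lemma~\ref{indepvalue}) minmax level, a slack parameter chosen from the strict inequality $f_i(p^*)>\overline{v}_i-\ep$, and a verification that conditions on whether a deviation is ever detected. The only cosmetic difference is that you bound the resulting convex combination by a maximum rather than writing it out with the weight $\mu=\P_{\sigma_i,\sigma^*_{-i}}(\{p^*\})$ as the paper does; both give the same conclusion.
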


\begin{proof}
Choose $\delta > 0$ so that for each player $i\in I$,
\begin{equation}
\label{delta-def}
\delta\,\leq\, f_i(p^*)-(\overline{v}_i-\ep).
\end{equation}
By Lemma~\ref{indepvalue},
$\overline{v}_i=\overline{v}_i(h)$
for each history $h\in H$ and each player $i\in I$. 
Thus,  by the definition of the minmax value, 
for each history $h\in H$ and each player $i\in I$, 
there exists a strategy profile $\sigma^{h,i}_{-i}$ for player $i$'s opponents such that for each strategy $\sigma_i$ of player $i$ we have
\begin{equation}
\label{pun}
\E_{\sigma_i,\sigma^{h,i}_{-i}}[f_i\mid h]\,\leq\,\overline{v}_i+\delta.
\end{equation}
Intuitively, the strategy profile $\sigma^{h,i}_{-i}$ can be seen as a punishment
against player $i$.

Denote $p^*=(a^{*1}, a^{*2},\ldots)$. For each player $i\in I$, define a strategy $\sigma^*_i$ as follows. For any history $h$ at any stage $t\in\dN$:
\begin{itemize}
\item
If, along $h$, each player played consistently with $p^*$, that is, $h=(a^{*1},\ldots,a^{*t-1})$, then $\sigma^*_i(h):=a^{*t}_i$.
\item
Otherwise, let $t'$ be the first stage at which a player, say player $j$, deviated from $p^*$ along $h$.\footnote{If 
more than one
player deviated from $p^*$ at 
stage $t'$, then $j$ is 
the deviator with the smallest index.} Let $h'=(a^{*1},\ldots,a^{*t'})$,
and
set $\sigma^*_i(h) := \sigma^{h',j}_i(h)$ for each $i\neq j$, while $\sigma^*_j(h)$ is arbitrary.
\end{itemize}
Intuitively, the players are supposed to follow the play $p^*$. 
If, however, a player deviates from $p^*$, 
all other players
switch to a strategy profile that 
ensures
that the deviator's expected payoff is at most her minmax value plus $\delta$.

It is straightforward to verify that the strategy profile $\sigma^*=(\sigma^*_i)_{i\in I}$ is an $\ep$-equilibrium.
Indeed, $\sigma^*$ induces the play $p^*$, and hence under $\sigma^*$ each player $i$'s expected payoff satisfies 
\begin{equation}
\label{eq111}
\E_{\sigma^*}[f_i] \,=\, f_i(p^*) \,\geq\, \overline{v}_i-\ep+\delta.
\end{equation} 
Consider now an arbitrary strategy
$\sigma_i$ for some player $i\in I$. Let $\mu$ be the probability that the realized play under $(\sigma_i,\sigma^*_{-i})$ is $p^*$:
\[\mu:=\P_{\sigma_i,\sigma^*_{-i}}(\{p^*\}).\]
Due to the punishment strategies (cf. \Eqref{pun}), we have
\begin{equation}
\label{eq222}
\E_{\sigma_i,\sigma^*_{-i}}[f_i] \,\leq\, \mu\cdot f_i(p^*)+(1-\mu)\cdot (\overline{v}_i+\delta).
\end{equation} 
By Eqs.~\eqref{delta-def}, \eqref{eq111}, and \eqref{eq222}, 
\[\E_{\sigma_i,\sigma^*_{-i}}[f_i] -\E_{\sigma^*}[f_i] \,\leq\,(1-\mu)\cdot (\overline{v}_i+\delta-f_i(p^*))\,\leq\,\ep,\]
which completes the proof.
\end{proof}\\

%Two of the four proofs will use Lemma \ref{suff-cond} to 
%prove the existence of an $\ep$-equilibrium.
%However, 
%in each proof we use a different technique to identify a play $p^*$ with the required property. 

One implication of
Lemma \ref{suff-cond} is the following: 
if there exists a strategy profile that, in each subgame, gives each player a payoff of at least her minmax value minus some $\delta\in[0,\ep)$, then there exists an $\ep$-equilibrium.  

\begin{corollary}
\label{suff-corr}
Assume that there exist a strategy profile $\sigma$ and some $\delta\in[0,\ep)$ such that $\E_\sigma[f_i\mid h]\geq \overline{v}_i-\delta$ for each player $i\in I$ and each history $h\in H$. Then, $\Gamma$ admits an $\ep$-equilibrium.
\end{corollary}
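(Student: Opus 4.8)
The plan is to deduce this from Lemma~\ref{suff-cond} by manufacturing, out of the strategy profile $\sigma$, a single deterministic play $p^*$ along which \emph{every} player simultaneously secures at least her minmax value minus $\delta$. Since $\delta<\ep$, such a play satisfies $f_i(p^*)\ge\overline{v}_i-\delta>\overline{v}_i-\ep$ for every $i\in I$, and Lemma~\ref{suff-cond} then yields the desired $\ep$-equilibrium. The difficulty is that the hypothesis only controls \emph{expected} payoffs, whereas Lemma~\ref{suff-cond} requires a pointwise statement, and moreover the play must be good for all players at once.

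First I would reinterpret the conditional payoffs as a martingale. Let $\calF_t$ denote the $\sigma$-algebra on $A^\dN$ generated by the first $t$ coordinates, so that $(\calF_t)_{t\ge 0}$ is the natural filtration and $f_i$ is measurable with respect to $\calF_\infty=\calF(A^\dN)$. For a play $p$ with prefix $h=p^{\le t}$, the $\P_\sigma$-conditional distribution of the continuation is exactly $\P_{\sigma_h}$, so
\[
\E_\sigma[f_i\mid\calF_t](p)\;=\;\int_{A^\dN}f_{i,h}(q)\ \P_{\sigma_h}(dq)\;=\;\E_\sigma[f_i\mid h].
\]
Thus the bounded martingale $M^t_i:=\E_\sigma[f_i\mid\calF_t]$ equals $\E_\sigma[f_i\mid p^{\le t}]$ at each play $p$, which by hypothesis is at least $\overline{v}_i-\delta$; hence $M^t_i\ge\overline{v}_i-\delta$ holds $\P_\sigma$-almost surely, for every $t$.

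Next I would apply L\'evy's upward convergence theorem: since $f_i$ is bounded and $\calF_\infty$-measurable, $M^t_i\to f_i$ as $t\to\infty$, $\P_\sigma$-almost surely. Passing to the limit in the uniform lower bound $M^t_i\ge\overline{v}_i-\delta$ gives $f_i\ge\overline{v}_i-\delta$ on a set of $\P_\sigma$-probability one, for each fixed $i$. Because $I$ is finite, the intersection over all players of these probability-one events again has probability one, hence is nonempty; any play $p^*$ in it satisfies $f_i(p^*)\ge\overline{v}_i-\delta$ simultaneously for all $i\in I$.

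Finally, since $\delta<\ep$, this play obeys $f_i(p^*)>\overline{v}_i-\ep$ for every player, so Lemma~\ref{suff-cond} applies and produces an $\ep$-equilibrium. The only genuinely delicate point is the simultaneity across players: controlling each $f_i$ separately in expectation would not suffice, and it is precisely the \emph{every-history} (conditional) form of the hypothesis, fed through martingale convergence, that upgrades the expectation bounds into an almost-sure pointwise bound holding for all players along a common play.
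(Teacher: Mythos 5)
Your argument is correct and is essentially the paper's own proof: the paper also invokes L\'evy's zero-one law to upgrade the hypothesis $\E_\sigma[f_i\mid h]\ge\overline{v}_i-\delta$ for all $h$ into $\P_\sigma(f_i\ge\overline{v}_i-\delta)=1$, intersects these full-measure events over the finitely many players to extract a play $p^*$, and then applies Lemma~\ref{suff-cond}. You have simply spelled out the martingale convergence step in more detail.
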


\begin{proof}
By L\'{e}vy's zero-one law, we have $\P_\sigma(f_i\geq \overline{v}_i-\delta)=1$ for each player $i\in I$. Since $I$ is finite, there is a play $p^*$ with $f_i(p^*)\geq \overline{v}_i-\delta>\overline{v}_i-\ep$ for each player $i\in I$. Hence, Lemma \ref{suff-cond} implies that $\Gamma$ admits an $\ep$-equilibrium.
\end{proof}

%%%%%%%%%%%%%%%%%%%%%%%%%%%%%%%%%%%%%%%%%%%%%%%%%%%%%%%%%%%%%%%%%%%
\section{First Proof: Subgame-Perfect Maxmin Strategies}
\label{sec-firstproof}

In this section, we prove Theorem~\ref{theorem:main} when there are only two players: $I= \{1,2\}$. The proof uses strategies, termed  \emph{subgame-perfect $\delta$-maxmin strategies}, that ensure that this player's expected payoff is at least her maxmin value minus $\delta$ in all subgames. 

Because there are only two players, as mentioned earlier, the minmax and the maxmin values are equal for each player: $\overline{v}_i=\underline{v}_i$ for each player $i\in I$. 

\begin{definition}
\label{def:maxmin}
Consider a Blackwell game. Let $i \in I$ and $\delta > 0$.
A strategy $\sigma_i^\delta$ for player $i$ is \emph{subgame-perfect $\delta$-maxmin}
if for every history $h\in H$ and every strategy profile $\sigma_{-i}\in\Sigma_{-i}$ of player $i$'s opponents,
\begin{equation*} 
\E_{\sigma_i^\delta,\sigma_{-i}}[f_i\mid h] \,\geq\, \underline{v}_i(h) - \delta.
\end{equation*}
\end{definition}
%\color{red}
%\begin{comment}
%A $\delta$-maxmin strategy guarantees the maxmin value up to $\delta$.
%In Definition~\ref{def:maxmin}, we required the subgame-perfect %$\delta$-maxmin strategy to guarantee the minmax value up to $\delta$.
%By Martin (1998),  in two-player zero-sum Blackwell games,
%the maxmin value and the minmax value coincide,
%and hence we could phrase Definition~\ref{def:maxmin} in terms of the maxmin %value rather than the minmax value.
%\end{comment}
%\color{black}
%This approach has been used by Mertens and Neyman (see Mertens, 1987) to prove the existence of $\ep$-equilibria in multiplayer Blackwell games with perfect information (multiplayer games with perfect information and infinite horizon, and the tail measurability of the payoff functions replaces the perfect information property of Mertens and Neyman's model.

The following theorem is shown in Mashiah-Yaakovi (2015); see also Flesch, Herings, Maes, Predtetchinski (2021) and Ashkenazi-Golan, Flesch, Predtetchinski, Solan (2022a).

\begin{theorem}[Mashiah-Yaakovi (2015)]
\label{Ayala}
Consider a Blackwell game.
For every $i \in I$ and $\delta > 0$,
player $i$ has a subgame-perfect $\delta$-maxmin strategy.
\end{theorem}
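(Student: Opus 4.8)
The plan is to prove Theorem~\ref{Ayala} by constructing a subgame-perfect $\delta$-maxmin strategy explicitly, using a geometrically shrinking error budget across stages and invoking the existence of ordinary $\delta'$-maxmin strategies in every subgame as a building block. First I would recall the basic fact (which follows from the definition of $\underline{v}_i(h)$) that for any history $h$ and any target error $\eta>0$, player $i$ possesses a strategy $\tau_i^{h,\eta}\in\Sigma_i$ that defends $\underline{v}_i(h)-\eta$ in the subgame $\Gamma^h$, i.e.\ $\E_{\tau_i^{h,\eta},\sigma_{-i}}[f_i\mid h]\geq \underline{v}_i(h)-\eta$ for all $\sigma_{-i}\in\Sigma_{-i}$. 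The subtlety is that such a strategy only guarantees the value \emph{at the single history $h$}, and says nothing about later subgames; the whole difficulty of subgame-perfection is to patch together infinitely many such one-shot guarantees into one strategy that works simultaneously in \emph{every} subgame.

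The construction I would use proceeds by epochs. Fix a summable sequence $\delta_1,\delta_2,\dots$ of positive reals with $\sum_{t}\delta_t\leq\delta$, for instance $\delta_t=\delta\cdot 2^{-t}$. The strategy $\sigma_i^\delta$ is defined so that it follows a carefully chosen defending strategy for a stretch of stages, and then, at a well-chosen switching time, restarts with a fresh defending strategy adapted to the history reached so far. Concretely, I would maintain an increasing sequence of random switching stages $0=t_0<t_1<t_2<\cdots$; on the block of stages between $t_{k-1}$ and $t_k$ the player plays a strategy that defends, up to error $\delta_k$, the maxmin value of the subgame at the history reached at stage $t_{k-1}$. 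The switching stage $t_k$ is chosen large enough (depending on the realized history) that the chosen defending strategy has already ``locked in'' its guarantee up to an additional small error $\delta_k$, which is possible because the payoff integral over $A^{\dN}$ is approximated arbitrarily well by its conditional expectation given a sufficiently long prefix (a martingale / dominated-convergence argument using boundedness and Borel-measurability of $f_i$). The key quantitative point is that at each restart the defended level can only \emph{improve or stay essentially the same}, because $\underline{v}_i(h)$ is a submartingale-like quantity under the player's own optimal continuation: the value the player can secure going forward never drops by more than the accumulated errors.

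The heart of the argument, and the step I expect to be the main obstacle, is proving that this epoch-based strategy genuinely defends $\underline{v}_i(h)-\delta$ in \emph{every} subgame, not merely along the epochs visited on the equilibrium path. I would fix an arbitrary history $h$ and an arbitrary opponent profile $\sigma_{-i}$ and track the process $M_k=\underline{v}_i(h_k)$, where $h_k$ is the (random) history at switching stage $t_k$ after $h$; one shows, using the defending property on each block together with the dynamic-programming inequality for the maxmin value, that $\E_{\sigma_i^\delta,\sigma_{-i}}[M_{k}\mid h]\geq \underline{v}_i(h)-\sum_{j\le k}\delta_j$ and that the conditional expectation of $f_i$ given the whole play is squeezed against $\liminf_k M_k$. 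Passing to the limit $k\to\infty$ and using $\sum_t\delta_t\leq\delta$ together with Fatou's lemma then yields $\E_{\sigma_i^\delta,\sigma_{-i}}[f_i\mid h]\geq\underline{v}_i(h)-\delta$, which is the desired conclusion. The delicate points requiring care are the measurability of the switching times and of the composite strategy (they must be genuine behavior strategies, i.e.\ functions on $H$), and the uniform control of the approximation errors so that the tail of the error series, rather than its individual terms, is what appears in the final bound.
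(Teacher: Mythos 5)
There is a genuine gap at the step you yourself flag as the heart of the argument: the claim that a defending strategy's guarantee can be ``locked in'' by a finite switching time. A strategy $\tau_i^{h,\eta}$ that defends $\underline{v}_i(h)-\eta$ does so only on the condition that it is played forever; for a general bounded Borel (even tail-measurable) payoff, the quantity $\E_{\tau_i,\sigma_{-i}}[f_i\mid h_t]$ at any finite $t$ is computed under the continuation of $\tau_i$, so switching to a fresh strategy at stage $t_k$ voids the guarantee entirely rather than preserving it up to $\delta_k$. The martingale/dominated-convergence fact you invoke (conditional expectations along a fixed measure converge to $f_i$) gives almost-sure convergence under one fixed profile; it does not give a finite time, uniform over the adversarial $\sigma_{-i}$, after which the block strategy's guarantee survives a change of continuation. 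Relatedly, the ``squeeze'' of $f_i$ against $\liminf_k M_k$ with $M_k=\underline{v}_i(h_{t_k})$ is precisely the hard content of the theorem and is not established: the maxmin value process is a submartingale under the \emph{locally value-optimal} strategy (the one achieving the max in the dynamic-programming relation $\underline{v}_i(h)=\max_{x_i}\min_{x_{-i}}\E_x[\underline{v}_i(ha)]$), but a payoff-defending $\delta_k$-maxmin strategy need not be locally value-optimal, and a locally value-optimal strategy need not defend the payoff --- for general Borel payoffs one can have $\lim_t\underline{v}_i(h_t)>f_i(p)$ with positive probability under such play. Your construction needs both properties simultaneously, and reconciling them is exactly what the known proofs accomplish by other means.

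Note also that the paper does not prove Theorem~\ref{Ayala}; it cites Mashiah-Yaakovi (2015) (see also Flesch, Herings, Maes, Predtetchinski (2021)). The route those proofs take, and the one this paper's own machinery points to, is Martin's covering/unfolding construction: one builds an auxiliary function $d:H\to\dR$ as in Theorem~\ref{theorem-count} (stated here for the minmax value, with an analogous version for the maxmin value against the coalition $-i$) whose one-shot values at every history are within $\eta$ of $\underline{v}_i(h)$, and such that playing one-shot optimally with respect to $d$ at \emph{every} history yields the global guarantee in \emph{every} subgame. The subgame-perfect $\delta$-maxmin strategy is then the stationary-in-$d$ strategy that maximizes $\min_{x_{-i}}u_i^{d,h}(x_i,x_{-i})$ at each $h$; no epoch concatenation or switching-time argument is needed, and the delicate infinite-horizon bookkeeping is absorbed into the construction of $d$. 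If you want to pursue your epoch idea, you would have to prove the lock-in property, which in this generality is false.
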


We can now turn to the proof of Theorem~\ref{theorem:main} when $|I|=2$.\medskip

\begin{proof}[Proof of Theorem~\ref{theorem:main} when $|I|=2$]
%Let 
%$\Gamma_1 = (I,(A_1,A_2),(f_1,-f_1))$
%be the two-player zero-sum Blackwell game that arises from $\Gamma$ by %assuming that player 1 maximizes her payoff and player 2 minimizes player 1's %payoff. Similarly, let $\Gamma_2 = (I,(A_1,A_2),(-f_2,f_2))$
%be the two-player zero-sum Blackwell game that arises from $\Gamma$ by %assuming that player 2 maximizes her payoff and player 1 minimizes player 2's %payoff.
For each $i \in I = \{1,2\}$,
let $\sigma_i$ be a subgame-perfect $\frac{\ep}{2}$-maxmin strategy of player~$i$. Then, 
by Lemma \ref{indepvalue}, 
for each player $i \in \{1,2\}$ and each history $h\in H$
\[
\E_{\sigma_1,\sigma_2}[f_i\mid h] \,\geq\,  \underline{v}_i-\tfrac{\ep}{2}.\]
 As $\underline{v}_i=\overline{v}_i$ for each player $i\in \{1,2\}$, by Corollary \ref{suff-corr}, the game $\Gamma$ has an $\ep$-equilibrium.
\end{proof}

\begin{remark}\label{rem-stoch}\rm 
By using subgame-perfect $\delta$-maxmin strategies, Flesch and Solan (2022) proved a result that is strongly related to Theorem~\ref{theorem:main} when there are only two players: each two-player stochastic game with finite state and action spaces and bounded, Borel-measurable, and shift-invariant payoffs admits an $\ep$-equilibrium, for all $\ep>0$. This result allows to have finitely many states, but uses shift-invariance of the payoffs, which is 
a stronger property
than tail-measurability.
\end{remark}

\begin{remark}\rm 
The reader may wonder whether the proof works in the presence of more than two players.
Unfortunately, the answer is negative.
The reason is that when $|I|>2$,
as mentioned earlier, the minmax value of a player can be strictly higher than her maxmin value. Consequently, when a player adopts a subgame-perfect $\delta$-maxmin strategy,
she is no longer guaranteed to obtain a payoff of at least her minmax value minus $\delta$. In particular, when all players follow subgame-perfect $\delta$-maxmin strategies, we cannot apply Corollary \ref{suff-corr} as in the proof above.
\end{remark}

%%%%%%%%%%%%%%%%%%%%%%%%%%%%%%%%%%%%%%%%%%%%%%%%%%%%%%%%%%%%%%%%%%%
\section{Second Proof: Regularity of the Minmax Value}

In this section we prove Theorem~\ref{theorem:main} using regularity of the minmax value.

\begin{definition}
Let $Y$ be a topological space,
and let $\mathcal{F}(Y)$ denote the Borel sigma-algebra of $Y$. 
A function $g :  \mathcal{F}(Y) \to \dR$ is \emph{inner-regular} if for each 
$Z\in \mathcal{F}(Y)$
we have
\[ g(Z)\, =\, \sup\{ g(C) \colon C \subseteq Z,\ C \hbox{ compact}\}. \]
\end{definition}

It is well known that every probability measure defined on the Borel sets of a compact metric space is inner-regular. In particular, for each strategy profile $\sigma$, the probability measure $\P_\sigma$ is inner-regular on the set $A^\dN$ of plays.

It follows from Martin (1998) that, given two nonempty and finite action sets $A_1$ and $A_2$, the function that assigns to each Borel-measurable set $W \in \mathcal{F}(A^\dN)$ the value of the two-player zero-sum Blackwell game $(\{1,2\}, (A_1, A_2), (\mathbf{1}_W,-\mathbf{1}_W)$ is inner-regular,
where $\mathbf{1}_W$ denotes the characteristic function of the set $W$.
Ashkenazi-Golan, Flesch, Predtetchinski, and Solan (2022b)
extended this result for the minmax value (and for the maxmin value) of multiplayer Blackwell games, and this is stated next. 

\begin{theorem}[Ashkenazi-Golan, Flesch, Predtetchinski, Solan (2022b)]
\label{theorem-reg}
Let $I$ and 
$(A_i)_{i \in I}$
be nonempty and finite sets. 
For each $i\in I$ and
$W_i \in\mathcal{F}(A^\dN)$, let $\overline{v}_i(W_i)$ denote the minmax value of player $i$ in the Blackwell game with $I$ as the set of players, $A_j$ as the action set of player $j$, for each $j\in I$, and $\mathbf{1}_{W_i}$ as the payoff function of player $i$. Then, 
%for each $\underline{v}_i \in\mathcal{F}(A^\dN)$
\[ \overline{v}_i(W_i) \,=\, \sup\{\overline{v}_i(C_i) \colon C_i \subseteq W_i,\, C_i \hbox{ compact}\}. \]
\end{theorem}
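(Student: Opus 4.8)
The plan is to prove the nontrivial inequality $\overline{v}_i(W_i)\le \sup\{\overline{v}_i(C_i)\colon C_i\subseteq W_i \text{ compact}\}$; the reverse inequality is immediate from monotonicity, since $C_i\subseteq W_i$ gives $\mathbf{1}_{C_i}\le \mathbf{1}_{W_i}$ pointwise and hence $\overline{v}_i(C_i)\le \overline{v}_i(W_i)$ for every pair of strategies. Fix $i$ and write $W=W_i$. It is convenient to record the minmax value as $\overline{v}_i(W)=\inf_{\sigma_{-i}\in\Sigma_{-i}}\Phi_W(\sigma_{-i})$, where $\Phi_W(\sigma_{-i}):=\sup_{\sigma_i\in\Sigma_i}\P_{\sigma_i,\sigma_{-i}}(W)$ is the value of the one-player optimization faced by player $i$ after the opponents commit to $\sigma_{-i}$. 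Note that the opponents are confined to \emph{independent} behavior strategies, so $\overline{v}_i(W)$ is not literally the value of the two-player zero-sum game obtained by merging the opponents into a single correlating adversary; this is the first place where care is needed.

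I would next dispose of the easy half and isolate the true difficulty. For each \emph{fixed} opponent profile $\sigma_{-i}$ and each fixed $\sigma_i$, the measure $\P_{\sigma_i,\sigma_{-i}}$ is a Borel probability measure on the compact metric space $A^\dN$ and is therefore inner-regular; taking the supremum over $\sigma_i$ yields $\Phi_W(\sigma_{-i})=\sup\{\Phi_{C}(\sigma_{-i})\colon C\subseteq W \text{ compact}\}$. Consequently $\overline{v}_i(W)=\inf_{\sigma_{-i}}\sup_{C}\Phi_C(\sigma_{-i})$, whereas the quantity we must reach is $\sup_C\inf_{\sigma_{-i}}\Phi_C(\sigma_{-i})=\sup_C\overline{v}_i(C)$. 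The weak max-min inequality recovers the easy direction once more, so the entire content of the theorem is the \emph{interchange} $\inf_{\sigma_{-i}}\sup_C\Phi_C=\sup_C\inf_{\sigma_{-i}}\Phi_C$; equivalently, for every $\eta>0$ one must exhibit a \emph{single} compact $C\subseteq W$ such that, against \emph{every} opponent profile $\sigma_{-i}$, player $i$ has a response reaching $C$ with probability at least $\overline{v}_i(W)-\eta$.

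To build such a uniform $C$, I would pass to the auxiliary two-player zero-sum games $\Gamma_C$ in which player $i$ maximizes $\mathbf{1}_C$ against the coalition of opponents. For compact $C$ the target event $\{p\in C\}$ is closed, so $\Gamma_C$ is determined by Martin's theorem and its value behaves like that of a closed Gale--Stewart objective; this is the lever that upgrades the fixed-opponent regularity above into a statement uniform over opponents, and it is the reason the proof cannot avoid Martin (1998). Concretely, I would combine the inner regularity of $W\mapsto \val(\Gamma_W)$ coming from Martin with the compactness and convexity of the strategy space $\Sigma_{-i}$, a countable product of simplices, together with a semicontinuity argument along a minimizing sequence of opponent profiles, extracting a subsequential limit and peeling off a compact $C$ that certifies the value simultaneously for all opponents. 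The main obstacle is precisely this uniformity: producing one compact set robust against the entire opponent strategy space, while respecting that the opponents must randomize \emph{independently}, so that a direct reduction to the merged two-player value is unavailable. Reconciling the independence constraint with the correlated adversary used in Martin's determinacy theorem, so that the interchange of $\inf_{\sigma_{-i}}$ and $\sup_C$ is justified, is where the real work lies, and I expect it to use both the closedness of $\mathbf{1}_C$ and the finiteness of the action sets in an essential way.
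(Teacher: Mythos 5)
Your proposal correctly isolates the crux: the easy inequality follows from monotonicity, fixed-opponent inner regularity gives $\overline{v}_i(W)=\inf_{\sigma_{-i}}\sup_{C}\Phi_C(\sigma_{-i})$, and the entire content of the theorem is the interchange of $\inf_{\sigma_{-i}}$ and $\sup_C$, i.e., exhibiting, for each $\eta>0$, a \emph{single} compact $C\subseteq W$ against which player $i$ can defend $\overline{v}_i(W)-\eta$ uniformly over all opponent profiles. But you do not carry out that step; you only announce that you ``would combine'' Martin-style regularity with a semicontinuity argument along a minimizing sequence of opponent profiles, and you concede that reconciling independent randomization with the correlated adversary is ``where the real work lies.'' That is precisely the part that constitutes the theorem, and the mechanism you gesture at does not obviously close it: for compact $C$ the map $\sigma\mapsto\P_\sigma(C)$ is an infimum of continuous functions of finitely many coordinates of $\sigma$, hence upper semicontinuous, but $\Phi_C(\sigma_{-i})=\sup_{\sigma_i}\P_{\sigma_i,\sigma_{-i}}(C)$ is a supremum of u.s.c.\ functions and need not inherit any semicontinuity; moreover the family of compact subsets of $W$ carries no convex structure, so no off-the-shelf minimax theorem licenses the interchange. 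Extracting a subsequential limit from a minimizing sequence of opponent profiles therefore does not by itself ``peel off'' the required compact set, and no candidate construction of $C$ is given.

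Note also that the paper itself does not prove this statement: it is imported wholesale from Ashkenazi-Golan, Flesch, Predtetchinski, and Solan (2022b), where the argument proceeds by adapting Martin's (1998) transfinite unraveling construction --- approximating the Borel payoff set from inside by closed sets level by level through the Borel hierarchy, with the minmax (independent-opponents) value tracked through the induction --- rather than by a compactness or semicontinuity argument on the strategy space. So your plan identifies the right obstacle and the right external lever (Martin's machinery for closed objectives, finiteness of the action sets), but as written it is a statement of the difficulty rather than a proof of the theorem.
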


We can now turn to the proof of Theorem~\ref{theorem:main} using regularity of the minmax value. The following proof is given in Ashkenazi-Golan, Flesch, Predtetchinski, and Solan (2022b).\medskip

\begin{proof}[Proof of Theorem~\ref{theorem:main} with regularity] In view of Lemma \ref{suff-cond}, it suffices to find a play $p^*$ such that $f_i(p^*)\geq \overline{v}_i-\tfrac{\ep}{2}$ for each player $i\in I$.

Let $\Gamma_\ep = (I,(A_i)_{i \in I},(\mathbf{1}_{W_i})_{i \in I})$ be the auxiliary Blackwell game, where 
\[ W_i \,:=\, \{ p \in A^\dN \colon f_i(p) \geq \overline{v}_i - \tfrac{\ep}{2}\},\qquad \forall i\in I. \]

For every player $i\in I$, we have $\overline{v}_i(W_i) = 1$. Indeed, consider a player $i\in I$, and a strategy profile $\sigma_{-i}$ of her opponents. It follows from Theorem \ref{Ayala} that, in the game $\Gamma$, player $i$ has a strategy $\sigma_i$ that is an $\tfrac{\ep}{2}$-best response 
to $\sigma_{-i}$ 
in each subgame. Hence, in particular, we have $\E_{\sigma_i,\sigma_{-i}}[f_i\mid h]\,\geq\,\overline{v}_i-\tfrac{\ep}{2}$ for each history $h$. By L\'{e}vy's zero-one law, we obtain $\P_{\sigma_i,\sigma_{-i}}(W_i)\,=1$. Consequently, $\overline{v}_i(W_i)=1$ as claimed.

Fix $\delta\in(0,\frac{1}{|I|})$. By Theorem \ref{theorem-reg}, for each player $i\in I$, there is a compact set $C_i \subseteq W_i$ such that $\overline{v}_i(C_i) \geq 1-\delta$. For every stage $t \in \dN$, let $D_i^t$ be the set of plays that agree with some play in $C_i$ in the first $t$ stages:
\[D^t_i\,=\,\{p\in A^\dN\colon p^{\leq t}=q^{\leq t}\text{ for some }q\in C_i\}.\]
Since $D^t_i \supseteq C_i$, we have $\overline{v}_i(D^t_i) \geq 1-\delta$.

Consider now the auxiliary Blackwell game $\Gamma_{\ep,t} = (I,(A_i)_{i \in I}, (\mathbf{1}_{D_i^t})_{i \in I})$, which is essentially a $t$-stage game, because the actions beyond stage $t$ do not influence the payoffs. As the game $\Gamma_{\ep,t}$ has finite horizon and the action sets of the players are finite, $\Gamma_{\ep,t}$ has a 0-equilibrium $\sigma^t = (\sigma_i^t)_{i \in I}$.
Then, for each player $i\in I$ we have 
\[\prob_{\sigma^t}(D^t_i) \,\geq\, \overline{v}_i(D^t_i) \,\geq\, 1-\delta.\]
Because $\delta<\frac{1}{|I|}$, we 
necessarily have
$\bigcap_{i \in I} D^t_i\neq\emptyset$.
%\footnote{See Lemma 4.2 in Ashkenazi-Golan, Flesch, Predtetchinski, and Solan (2022b).} 
Thus, for each $t\in\dN$, there is a play $p_{(t)} \in \cap_{i \in I} D^t_i$. Because $A^\dN$ is compact, the sequence $(p_{(t)})_{t=1}^\infty$ has an accumulation point~$p^*$, and as the sets $C_i$ are compact, 
$p^*\in\bigcap_{i \in I} C_i$. Since $C_i\subseteq W_i$ for each player $i\in I$, we have $f_i(p^*)\geq \overline{v}_i-\tfrac{\ep}{2}$ for each player $i\in I$, as desired.
\end{proof}

\section{Third Proof: Acceptable Strategy Profiles}

Solan (2018) defined the concept of \emph{acceptable} strategy profiles in the context of stochastic games.
In the context of repeated games,
these are strategy profiles that yield high payoff to all players in all subgames.

\begin{definition}
Consider a Blackwell game.
Let $\delta > 0$.
A strategy profile $\sigma^*$ is \emph{$\delta$-minmax acceptable} if
for every history $h\in H$ and every player $i \in I$,
\[ \E_{\sigma^*}[f_i\mid h] \,\geq\, \overline{v}_i(h) - \delta. \]
\end{definition}

In a two-player Blackwell game, the strategy profile $(\sigma^\delta_1,\sigma^\delta_2)$
where $\sigma^\delta_i$ is a subgame-perfect $\delta$-maxmin strategy of player~$i$, for each $i\in\{1,2\}$, is $\delta$-minmax acceptable (cf. Section \ref{sec-firstproof}). As the following theorem shows, $\delta$-minmax acceptable strategy profiles always exist even if there are more than two players.

\begin{theorem}[Ashkenazi-Golan, Flesch, Predtetchinski, Solan (2022a)]
\label{theorem:good:strategy}
In every Blackwell game, for every $\delta>0$,
there exists 
a $\delta$-minmax acceptable
strategy profile.
\end{theorem}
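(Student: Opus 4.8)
The goal is to prove Theorem~\ref{theorem:good:strategy}: in every Blackwell game (finitely many players, finite action sets, bounded Borel-measurable payoffs), for every $\delta>0$ there is a $\delta$-minmax acceptable strategy profile $\sigma^*$, meaning $\E_{\sigma^*}[f_i\mid h]\geq \overline{v}_i(h)-\delta$ for every player $i$ and every history $h$. Note that, unlike the first proof, this statement does not assume tail-measurability, so I cannot replace $\overline{v}_i(h)$ by the constant $\overline{v}_i$; the bound must hold against the genuine subgame minmax value, which varies with $h$.

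The plan is to construct $\sigma^*$ explicitly by a ``punishment after detected underperformance'' mechanism, combined with a careful bookkeeping of tolerances that shrink along the tree so that the total error stays below $\delta$. First I would recall that against any fixed strategy profile of the opponents, each player has an approximate best response in each subgame (this is the content of Theorem~\ref{Ayala}, applied player-by-player), and that the subgame minmax value $\overline{v}_i(h)$ satisfies a one-shot recursive inequality: from any history $h$, player~$i$'s opponents can choose a mixed action so that, whatever player~$i$ plays, the continuation minmax value does not increase by more than an arbitrarily small amount in expectation. I would use this to set up, for each player $i$, a scheme in which the profile $\sigma^*$ prescribes, at each history $h$, an action profile supporting the ``defense'' of every player simultaneously. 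The main device is a diagonal/martingale argument: define the shortfall process $\overline{v}_i(h^t)$ along the realized play and show it behaves like a submartingale under $\sigma^*$ up to summable errors, so that L\'evy-type arguments keep the realized continuation value close to its initial value.

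The key steps, in order, would be: (1) fix a summable sequence of tolerances $(\delta_t)$ with $\sum_t \delta_t < \delta$; (2) build $\sigma^*$ recursively on the history tree so that at every history $h$ of length $t$, the prescribed mixed action profile is chosen to approximately equalize and protect each player's continuation minmax value to within $\delta_t$ — concretely, choosing opponents' play at each node to witness the infimum in the definition of $\overline{v}_i(h)$ up to $\delta_t$, simultaneously for all $i$ by taking a single action profile that works for the collection (this is possible because the defense only constrains the opponents of $i$, and one can interleave or randomize the defenses); (3) show that under $\sigma^*$ the sequence $t\mapsto \overline{v}_i(h^t)$ is, along $\P_{\sigma^*}$-almost-every play, a near-martingale, so that $\E_{\sigma^*}[f_i\mid h]$ stays within $\sum_{s\geq t}\delta_s$ of $\overline{v}_i(h)$; and (4) invoke L\'evy's zero-one law to pass from the limiting continuation values to the actual payoff $f_i$, yielding $\E_{\sigma^*}[f_i\mid h]\geq \overline{v}_i(h)-\delta$ for every $h$.

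The main obstacle I anticipate is step~(2): constructing a \emph{single} strategy profile $\sigma^*$ that simultaneously defends all players in all subgames, rather than just one player at a time. For a fixed player $i$ the defense is a statement about the opponents $-i$, and these sets of constraints overlap across different players, so one cannot naively concatenate the individual punishment profiles. The resolution will hinge on the fact that the acceptability requirement is a lower bound on each player's own continuation value, and that the recursive structure of $\overline{v}_i(h)$ lets the opponents commit, at each node, to a mixed action profile that does not let any single deviator's continuation minmax value drift upward — the subtlety is ensuring the chosen profile is a genuine product distribution (one mixed action per player) and that the per-stage errors are controlled uniformly, which is exactly where finiteness of the action sets and the inner-regularity/recursion properties of the minmax value must be used carefully. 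Verifying that the accumulated errors telescope to less than $\delta$, and that the almost-sure convergence in step~(4) is uniform enough to give the bound at \emph{every} history (not just almost every play), is the delicate accounting I would need to get right.
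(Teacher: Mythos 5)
Your overall architecture---choose at each history $h$ a single mixed action profile that simultaneously protects every player's continuation value, then pass from this local guarantee to a global one---is the same as the paper's, and the obstacle you flag in step~(2) is actually not the problem. Since the minmax value satisfies the one-shot recursion $\overline{v}_i(h)=\min_{x_{-i}}\max_{x_i}\E_x[\overline{v}_i(ha)]$, a Nash equilibrium $x^h$ of the finite one-shot game with payoffs $a\mapsto \overline{v}_i(ha)$ gives every player at least her minmax value of that one-shot game, i.e.\ $\E_{x^h}[\overline{v}_i(ha)]\geq \overline{v}_i(h)$ for all $i$ simultaneously, with no error at all; this is exactly how the paper coordinates the players.

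The genuine gap is in steps~(3)--(4). Under your $\sigma^*$ the process $t\mapsto \overline{v}_i(h^t)$ is indeed a bounded submartingale and converges, but L\'evy's zero-one law applies to $\E_{\sigma^*}[f_i\mid h^t]$, not to $\overline{v}_i(h^t)$, and there is no general link between $\lim_t \overline{v}_i(h^t)$ and $f_i(p)$. A one-player example already defeats the argument: let $f(p)=1$ if some designated action $b$ is ever played and $0$ otherwise. Then $\overline{v}(h)=1$ for every $h$ and every action is ``value-preserving,'' so the locally value-preserving strategy that never plays $b$ has $\overline{v}(h^t)\equiv 1$ along the play yet realizes payoff $0$. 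This local-to-global passage is precisely what cannot be done with the true value function, and it is the content of the deep Martin-type result the paper invokes (Theorem~\ref{theorem-count}): one must replace $\overline{v}_i(ha)$ by an auxiliary function $d_i(ha)$, obtained from Martin's (1998) unravelling of the Borel payoff, whose defining property is that any profile securing player $i$'s one-shot minmax of $d_i$ at every history also secures $\E_\sigma[f_i\mid h]\geq \overline{\mathrm{val}}^{d_i,h}_i\geq \overline{v}_i(h)-\eta$ in every subgame. The paper then takes, at each $h$, a Nash equilibrium of the one-shot game with payoff vector $(d_i(ha))_{i\in I}$. Without this substitution your martingale/L\'evy step fails, and no choice of summable tolerances $(\delta_t)$ repairs it.
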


We provide a sketch of the proof of Theorem~\ref{theorem:good:strategy},
which relies on an extension of a deep result in Martin (1998) for two-player zero-sum Blackwell games.

For a function $d:H\to \dR$, a player $i\in I$, and a history $h\in H$, consider the following one-shot game $G_i^{d,h}$: Each player $j\in I$ chooses an action $a_j\in A_j$, simultaneously with the other players, which induces an action profile $a=(a_j)_{j\in I}$. Then, player $i$ receives the payoff $d(ha)$, where $ha$ is the extension of the history $h$ with the action profile $a$ at the next stage. Let $u^{d,h}_i(x)$ denote player $i$'s expected payoff in this one-shot game under a mixed action profile $x\in X$:
\[u^{d,h}_i(x)\,:=\,\sum_{a\in A}\left[\prod_{j\in I}x_j(a_j)\cdot d(ha)\right],\]
and let $\overline{\text{val}}^{d,h}_i$ denote player $i$'s minmax value in this one-shot game:
\[ \overline{\text{val}}^{d,h}_i \,:=\,\min_{x_{-i}\in X_{-i}}\max_{x_i\in X_i}\,u^{d,h}_i(x_i,x_{-i}).\]

The following result is an extension of Martin (1998).

\begin{theorem}[Ashkenazi-Golan, Flesch, Predtetchinski, Solan (2022a)]
\label{theorem-count}
Consider a Blackwell game.
Let $\eta >0$, and let $i\in I$ be a player. There exists a function $d:H\to\dR$ with the following properties:
\begin{enumerate}
    \item $ \overline{\text{val}}^{d,h}_i \,\geq\, \overline{v}_i(h)-\eta$ for every $h\in H$.
    \item For every strategy profile $\sigma$ that satisfies
\[u^{d,h}_i(\sigma(h))\,\geq\, \overline{\text{val}}^{d,h}_i \qquad\forall h\in H,\]
we have
\[\E_\sigma[f_i\mid h]\,\geq\, \overline{\text{val}}^{d,h}_i \qquad\forall h\in H.\]
\end{enumerate}
\end{theorem}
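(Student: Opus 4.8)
\emph{Overall strategy.} The plan is to produce a bounded potential $d:H\to\R$ that behaves like an approximate value function for player~$i$ against the \emph{independently} randomizing opponents. Writing $(\mathcal{T}d)(h):=\overline{\text{val}}^{d,h}_i$ for the one-shot minmax value obtained by plugging $d$ in as continuation payoff at $h$, I would build $d$ so that it satisfies three properties: (a) $(\mathcal{T}d)(h)\geq d(h)$ for every $h$ (a one-shot \emph{subsolution} condition), (b) $\limsup_{t}d(p^{\leq t})\leq f_i(p)$ for every play $p$ (a boundary condition saying $d$ never overshoots the realized payoff), and (c) $(\mathcal{T}d)(h)\geq \overline{v}_i(h)-\eta$ for every $h$. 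Note that (c) is exactly Property~1; properties (a)–(b) are the extra structure I need in order to deduce Property~2. A naive choice such as $d\equiv\overline{v}_i$, which is a single constant by Lemma~\ref{indepvalue}, satisfies (c) trivially but fails (a)/(b) in a way that makes Property~2 false, so a genuinely refined $d$ is unavoidable.

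\emph{Property~2 from (a) and (b).} Granting such a $d$, Property~2 is a short martingale computation, and this is the part I can carry out in full. Fix a history $h$ and a strategy profile $\sigma$ with $u^{d,h'}_i(\sigma(h'))\geq(\mathcal{T}d)(h')$ for all $h'$. Working in the subgame $\Gamma^h$ under $\P_{\sigma_h}$, set $M_t:=d(hp^{\leq t})$ with respect to the natural filtration. Then $\E_\sigma[M_{t+1}\mid \mathcal{F}_t]=u^{d,\,hp^{\leq t}}_i(\sigma(hp^{\leq t}))\geq(\mathcal{T}d)(hp^{\leq t})\geq M_t$, the first inequality by the greediness of $\sigma$ and the second by (a), so $(M_t)_t$ is a bounded submartingale. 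Boundedness makes it uniformly integrable, so it converges almost surely and in $L^1$ to some $M_\infty$, and $\E_\sigma[M_\infty\mid h]\geq\E_\sigma[M_1\mid h]=u^{d,h}_i(\sigma(h))\geq(\mathcal{T}d)(h)=\overline{\text{val}}^{d,h}_i$. By the boundary property (b), $M_\infty\leq f_{i,h}$ almost surely, whence $\E_\sigma[f_i\mid h]\geq\E_\sigma[M_\infty\mid h]\geq\overline{\text{val}}^{d,h}_i$, which is Property~2.

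\emph{Constructing $d$ — the main obstacle.} The substantial work, and the only genuinely new ingredient beyond Martin (1998), is producing $d$ with (a)–(c). In the two-player zero-sum case Martin's reduction of a Blackwell game to a game of perfect information, together with Borel determinacy (Martin 1975), yields exactly a one-shot-consistent value function with the boundary property; the $d$ I want would be (an $\eta$-perturbation of) the value function of the game in which player~$i$ is the maximizer and the remaining players jointly form the minimizing side. The difficulty is that the minmax value forces the opponents to randomize \emph{independently} at each stage, so $G_i^{d,h}$ is solved over the product set $X_{-i}$ rather than over correlated mixed actions on $A_{-i}$; this blocks a direct appeal to the two-player zero-sum theorem, because the correlated-coalition value is only a lower bound for $\overline{v}_i(h)$ and can be strictly smaller. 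I would therefore rebuild Martin's covering game so that the adversary's move encodes an \emph{independent} opponent profile — for instance by revealing the opponents' randomizations coordinate by coordinate so that the one-shot evaluation at each node is precisely $\overline{\text{val}}^{d,h}_i$ — and then check both that the resulting perfect-information game is still Borel and hence determined, and that the value function extracted from a determined covering game is one-shot consistent with respect to the independent-mixing operator $\mathcal{T}$. This is the extension established in Ashkenazi-Golan, Flesch, Predtetchinski, Solan (2022a), and I expect it to be the crux of the argument.

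\emph{Property~1 and conclusion.} Property~1 is then read off from the construction: the covering game is set up at the target level $\overline{v}_i(h)$, so the defending behavior that player~$i$ extracts from the determined covering game secures $\overline{v}_i(h)-\eta$ one stage at a time, which says exactly $(\mathcal{T}d)(h)\geq \overline{v}_i(h)-\eta$. I would note that tail-measurability streamlines this last step: by Lemma~\ref{indepvalue} the target $\overline{v}_i(h)=\overline{v}_i$ is a single constant, so the covering game can be run at one fixed level and need not track a history-dependent target. Combining (c) with the submartingale argument of the second paragraph delivers both properties simultaneously, completing the proof.
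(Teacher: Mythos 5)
The paper itself does not prove Theorem~\ref{theorem-count}: it states it as an imported result of Ashkenazi-Golan, Flesch, Predtetchinski, and Solan (2022a), described only as ``an extension of Martin (1998)'', so there is no in-paper argument to measure you against and your proposal must be judged as a standalone proof. Judged that way, the part you actually carry out is correct and well targeted: the bounded-submartingale computation showing that one-shot consistency $(\mathcal{T}d)(h)\geq d(h)$ together with the boundary condition $\limsup_t d(p^{\leq t})\leq f_i(p)$ implies Property~2 is exactly the standard mechanism by which such a function $d$ is exploited, and your diagnosis of the main obstacle --- that $\overline{\text{val}}^{d,h}_i$ is a minmax over the product set $X_{-i}$ of independent mixed actions, so Martin's two-player reduction (which would treat the coalition $-i$ as a single correlated player) only controls a possibly strictly smaller quantity --- is the right one.

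The genuine gap is that the existence of $d$ satisfying your conditions (a), (b), (c) \emph{is} the content of the theorem, and you do not construct it: you describe in one sentence how you ``would rebuild Martin's covering game so that the adversary's move encodes an independent opponent profile'' and then defer to the very reference to which the theorem is attributed. The covering construction --- defining the auxiliary perfect-information games, proving they remain Borel and hence determined once the adversary's moves are constrained to encode independent randomizations, and extracting from the winning strategies a single function $d$ that is simultaneously one-shot consistent for the independent-mixing operator, dominated by $f_i$ along every play, and anchored at level $\overline{v}_i(h)-\eta$ --- is where essentially all the work lies, and none of it is present. In particular, it is never verified that the extracted $d$ satisfies (a) with respect to $\min_{x_{-i}\in X_{-i}}\max_{x_i\in X_i}$ rather than only its weaker correlated analogue, which is precisely the point at which the two-player argument breaks. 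As written, the proposal is a correct reduction of Property~2 to unproved properties of an unconstructed object, together with an accurate roadmap of what would still have to be done.
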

In words, the first property means that player $i$'s minmax value $ \overline{\text{val}}^{d,h}_i $ in the one-shot game $G_i^{d,h}$, at any history $h$, is at least her minmax value 
$\overline{v}_i(h)$
in the Blackwell game minus $\eta$. The second property says that if a strategy profile $\sigma$, at each history $h$, gives player $i$ a payoff of at least her minmax value $ \overline{\text{val}}^{d,h}_i $ in the one-shot game $G_i^{d,h}$, then this strategy profile $\sigma$, in the subgame of the Blackwell game at any history $h$, gives player $i$ a payoff of at least $\overline{\text{val}}^{d,h}_i$.

\bigskip

\begin{proof}[Proof of Theorem~\ref{theorem:good:strategy}]
Let $\eta\in(0,\delta)$, and for each player $i\in I$ let $d_i:H\to\dR$ be as in Theorem \ref{theorem-count}.

At each history $h$ consider the following one-shot game: Each player $i\in I$ chooses an action $a_i\in A_i$, simultaneously with the other players, which induces an action profile $a=(a_i)_{i\in I}$. Then, each player $i\in I$ receives the payoff $d_i(ha)$. Since the action sets are finite, this game has an equilibrium $x^h\in X$. Note that $x^h$ gives each player $i\in I$ an expected payoff of at least her minmax value in this one-shot game, i.e., $u_i^{d_i,h}(x^h)\geq
\overline{\text{val}}_i^{d_i,h}$ for each player $i\in I$.

Define the strategy profile $\sigma^*$ by letting $\sigma^*(h)=x^h$ for each history $h$. Then, by Theorem \ref{theorem-count}, for each player $i\in I$ 
we find
\begin{equation}
\label{atleastdelta}
\E_{\sigma^*}[f_i\mid h]\,\geq\,
\overline{\text{val}}^{d,h}_i
\, \geq\, \overline{v}_i(h)-\eta\,\geq\, \overline{v}_i(h)-\delta\qquad\forall h\in H,
\end{equation}
which completes the proof.
\end{proof}\bigskip

\begin{proof}[Proof of Theorem~\ref{theorem:main} with acceptable strategy profiles]
By Theorem~\ref{theorem:good:strategy}, the game $\Gamma$ admits an $\frac{\ep}{2}$-minmax acceptable strategy profile. As the payoff functions of $\Gamma$ are tail-measurable,
Lemma~\ref{indepvalue} implies that $\overline{v}_i=\overline{v}_i(h)$ for each player $i\in I$ and each history $h\in H$. 
Hence, Corollary~\ref{suff-corr} implies now that $\Gamma$ has an $\ep$-equilibrium.
\end{proof}

\begin{remark}\label{count-remark}\rm 
The proof method above can be slightly adjusted to show that Theorem~\ref{theorem:main} holds even if the number of players is countably infinite, see Ashkenazi-Golan, Flesch, Predtetchinski, Solan (2022a). Since this involves more technical details on the measure-theoretic foundation of the model and the results, we only briefly discuss how the method can be applied. 

Let $I=\dN$, and let $d_i$ be a function as in the proof above for each player $i\in I$. Fix for each player $i\in I$ an arbitrary action $\widehat a_i\in A_i$.

Consider a strategy profile that prescribes to play at each stage $t\in\dN$ as follows: Each player $k>t$ plays the action $\widehat a_k$. Given these actions, the remaining finitely many players, i.e, players $1,\ldots,t$, play an equilibrium in the one-shot game given by the functions $d_1,\ldots,d_t$.

Thus, each player $i$ plays the action $\widehat a_i$ at stages $1,\ldots,i-1$, and plays according to the function $d_i$ at stages $i,i+1,\ldots$. Since the payoffs are tail-measurable, the first $i-1$ stages do not matter for player $i$'s payoff, and therefore playing according to $d_i$ at all stages from stage $i$ on guarantees that player $i$'s expected payoff is at least $\overline{v}_i-\delta$, similarly to \Eqref{atleastdelta}. 
\end{remark}

%%%%%%%%%%%%%%%%%%%%%%%%%%%%%%%%%%%%%%%%%%%%%%%%%%%%%%%%%%%%%%%%%%%
\section{Fourth Proof: Detection of Deviations}

All the $\ep$-equilibria we constructed so far had a similar structure:
the players follow some fixed play that yields all players a payoff at least their minmax value minus $\ep$,
and a deviation triggers an indefinite punishment.
In this section we construct an $\ep$-equilibrium that involves mixtures on the equilibrium path.

Alon, Gunby, He, Shmaya, and Solan (2022)
studied a model where there is a certain \emph{goal},
defined by a set $D \subseteq A^\dN$,
and the players are supposed to follow a given strategy profile $\sigma^*$ such that $\prob_{\sigma^*}(D)$ is close to 1.
Suppose that the realized play turns out to be outside $D$.
Assuming that at most one player deviated,
the question is whether an outside observer who observes the play can correctly identify the deviator 
with high probability.
As the following result states, the answer is positive.

\begin{theorem}[Alon, Gunby, He, Shmaya, and Solan (2022)]
\label{theorem:blame}
Consider a Blackwell game. 
Let $\sigma^* \in \Sigma$ be a strategy profile, let $\delta > 0$, and let $D \subseteq A^\dN$ satisfy 
$\prob_{\sigma^*}(D) > 1-\delta$.
There exists a function $g : D^c \to I$ such that for every $i \in I$ and every $\sigma_i \in \Sigma_i$, 
\[ \prob_{\sigma_i,\sigma^*_{-i}}(D^c \hbox{ and } \{g \neq i\})\, <\, 2\sqrt{(|I|-1)\delta}. \]
\end{theorem}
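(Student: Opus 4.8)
The plan is to build the blame function from a martingale that tracks, along the realized play, the conditional probability of the bad event $D^c$ under the prescribed profile, and then to attribute the growth of this martingale to the individual players. Write $\P^{*}=\prob_{\sigma^{*}}$ and, for a deviation $\sigma_i$ of player $i$, write $\P^{i}=\prob_{\sigma_i,\sigma^{*}_{-i}}$. Let $(\calF_t)$ be the filtration generated by the first $t$ action profiles and set $\phi^{t}=\P^{*}(D^{c}\mid\calF_t)$. Under $\P^{*}$ this is a bounded martingale with $\phi^{0}=\P^{*}(D^{c})<\delta$, and by L\'evy's zero--one law $\phi^{t}\to\mathbf 1_{D^{c}}$ $\P^{*}$-almost surely. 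The first step is to split each increment $\phi^{t}-\phi^{t-1}$ into per-player pieces by revealing the players' stage-$t$ actions one at a time; since the players randomize independently, the partial conditional expectations telescope, producing for each player $j$ a process $M_j$ with $\sum_{j\in I}M_j^{t}=\phi^{t}-\phi^{0}$ pathwise, such that under $\P^{*}$ the $M_j$ are mutually orthogonal mean-zero martingales and $\sum_{j\in I}\E_{\sigma^{*}}[(M_j^{\infty})^{2}]=\mathrm{Var}_{\sigma^{*}}(\mathbf 1_{D^{c}})\le\delta$.

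The conceptual heart is the asymmetry between the deviator and the rest. For every $j\neq i$ the process $M_j$ is still a mean-zero martingale under $\P^{i}$: its stage-$t$ increment integrates player $j$'s action against $\sigma^{*}_j$, which $\P^{i}$ leaves untouched, so the martingale identity survives the change of measure. The deviator's process, by contrast, absorbs all of the drift: taking expectations in the telescoping identity and using $\E_{\sigma_i,\sigma^{*}_{-i}}[M_j^{\infty}]=0$ for $j\neq i$ gives $\E_{\sigma_i,\sigma^{*}_{-i}}[M_i^{\infty}]=\P^{i}(D^{c})-\phi^{0}$. Thus under a deviation of $i$ the suspicion statistic $M_i$ acquires a positive expectation roughly equal to $\P^{i}(D^{c})$, while each innocent statistic stays centered at $0$.

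I would then define the blame function on $D^{c}$ by $g=\argmax_{j\in I}M_j^{\infty}$ (ties broken by index) and bound $\P^{i}(D^{c}\cap\{g\neq i\})$ by a case distinction on $q:=\P^{i}(D^{c})$. If $q\le 2\sqrt{(|I|-1)\delta}$ the estimate is immediate since $\{g\neq i\}\cap D^{c}\subseteq D^{c}$. Otherwise, fix a cutoff $\theta\approx\tfrac12$ and use $\{g\neq i\}\subseteq\{M_i^{\infty}<\theta\}\cup\bigcup_{j\neq i}\{M_j^{\infty}\ge\theta\}$. On $D^{c}$ one has $\sum_{j}M_j^{\infty}=1-\phi^{0}$, so $\{M_i^{\infty}<\theta\}\cap D^{c}=\{\sum_{j\neq i}M_j^{\infty}>1-\phi^{0}-\theta\}\cap D^{c}$; since the innocent statistics are mean-zero and (as one checks) remain mutually orthogonal under $\P^{i}$, Chebyshev's inequality controls this by $\tfrac{1}{(1-\phi^{0}-\theta)^{2}}\,S_i$, where $S_i:=\sum_{j\neq i}\E_{\sigma_i,\sigma^{*}_{-i}}[(M_j^{\infty})^{2}]$. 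For the union, each $M_j$ with $j\neq i$ being mean-zero gives $\theta\,\P^{i}(M_j^{\infty}\ge\theta)\le\E_{\sigma_i,\sigma^{*}_{-i}}[(M_j^{\infty})^{+}]=\tfrac12\E_{\sigma_i,\sigma^{*}_{-i}}|M_j^{\infty}|$, and Cauchy--Schwarz over the $|I|-1$ innocent players turns $\sum_{j\neq i}\sqrt{\E_{\sigma_i,\sigma^{*}_{-i}}[(M_j^{\infty})^{2}]}$ into $\sqrt{(|I|-1)\,S_i}$. With $\theta$ a fixed constant both contributions are of order $\sqrt{(|I|-1)\,S_i}$, and optimizing $\theta$ yields the bound $2\sqrt{(|I|-1)\delta}$ provided $S_i\le\delta$.

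The main obstacle is precisely this last inequality $S_i\le\delta$: it asks that the innocent players' accumulated quadratic variation, measured under the deviator's law $\P^{i}$, not exceed the $\sigma^{*}$-variance $\mathrm{Var}_{\sigma^{*}}(\mathbf 1_{D^{c}})<\delta$. Each conditional increment-variance of $M_j$ for $j\neq i$ is a fixed function of the history that does not see $i$'s deviation, but its $\P^{i}$-expectation is taken under a law that the deviator has tilted, so the clean $\P^{*}$-bound does not transfer term by term. The crux is therefore to show that player $i$'s deviation injects only drift, and no additional variance, into the channels of the other players---intuitively, $i$ can bias where the play goes but cannot manufacture extra randomness on the coordinates controlled by $j\neq i$---so that $S_i$ stays governed by $\mathrm{Var}_{\sigma^{*}}(\mathbf 1_{D^{c}})$. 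Finally I would attend to the measure-theoretic bookkeeping the argument glosses over, namely that $M_j^{\infty}$ exists and $\phi^{t}\to\mathbf 1_{D^{c}}$ hold $\P^{i}$-almost surely and not merely $\P^{*}$-almost surely; this can be arranged by perturbing $\sigma^{*}$ toward full support (at a cost absorbed into $\delta$) to obtain absolute continuity on finite horizons, and by defining $g$ through finite-stage truncations of $\phi$ and passing to the limit, so that the decomposition and the two tail estimates are ultimately applied on events of full measure under every relevant $\P^{i}$.
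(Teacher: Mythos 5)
The paper does not actually prove Theorem~\ref{theorem:blame}: it is quoted without proof from Alon, Gunby, He, Shmaya, and Solan (2022), so there is no in-paper argument to compare yours against. Judged on its own merits, your proposal has a genuine gap, and it is precisely the one you flag as ``the crux'': the inequality $S_i\le\delta$ (or even $S_i\lesssim(|I|-1)\delta$) is false in general, and the intuition that the deviator ``cannot manufacture extra randomness on the coordinates controlled by $j\neq i$'' does not hold. The deviator cannot alter player $j$'s conditional randomization, but she can steer the play onto histories where player $j$'s action is pivotal for $D^c$ --- histories that $\sigma^*$ essentially never visits. The conditional increment-variances of $M_j$ are then averaged under a distribution over histories completely different from $\P^{*}$, and their $\P^{i}$-expectation can be of order $1$ even though $\mathrm{Var}_{\sigma^*}(\mathbf 1_{D^c})<\delta$. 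So the Chebyshev/Cauchy--Schwarz step has nothing to feed on.

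Worse, the blame rule $g=\argmax_j M_j^\infty$ is itself wrong, not merely unproved. Take two players with $A_1=\{a,a'\}$, $A_2=\{b,b'\}$; let $\sigma^*_1$ play $a$ at stage $1$ with probability $\eta$ and $\sigma^*_2$ play $b$ with probability $0.4$, and let $D^c$ be the cylinder of plays beginning with $(a,b)$, so $\P_{\sigma^*}(D^c)=0.4\eta<\delta$ for $\delta$ slightly above $0.4\eta$. Revealing player 1's stage-1 action first (a symmetric example handles any other fixed order), on $D^c$ one gets $M_1^\infty=0.4(1-\eta)$ and $M_2^\infty=0.6$, so your $g$ blames player $2$ on all of $D^c$; yet if player 1 deviates by playing $a$ surely, $\P_{\sigma_1,\sigma^*_{2}}\bigl(D^c\cap\{g\neq 1\}\bigr)=0.4$, while the required bound $2\sqrt{\delta}$ tends to $0$ with $\eta$. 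Here $S_1=\E_{\sigma_1,\sigma^*_2}[(M_2^\infty)^2]=0.24\gg\delta$, confirming that the variance transfer fails exactly where the identification fails. The correct blame function in this example is $g\equiv 1$: what matters is not which player's action moved the Doob martingale the most in absolute terms, but how surprising each player's own realized actions were relative to $\sigma^*$ --- a likelihood-ratio comparison rather than an increment comparison. The per-player martingale decomposition is a sensible starting point, but both the identification rule and the concentration argument need a genuinely different idea.
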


%Since the space $A^\dN$ is locally compact and Hausdorff,
%and since its topology has a countable base, 
%it follows that for every strategy profile $\sigma$, 
%the probability measure $\prob_\sigma$ is regular.

\bigskip

\begin{proof}[Proof of Theorem~\ref{theorem:main} with detection of deviations]
Fix $\delta > 0$ such that 
\begin{equation}
\label{choosing-delta}
\ep\,>\,4\delta + 4\cdot \left(\sqrt{(|I|-1)\cdot 2\delta}+\delta\right)\cdot\max_{i\in I}||f_i||_{\infty}.
\end{equation}
By Theorem~\ref{theorem:good:strategy},
there is a $\delta$-minmax acceptable strategy profile $\widehat \sigma$, that is, 
\[ \E_{\widehat \sigma}[f_i\mid h] \,\geq\, \overline{v}_i(h) - \delta \,=\, \overline{v}_i - \delta,\qquad\forall i \in I,\ \forall h \in H. \]
Since the payoff functions are bounded, there is
$c \in \dR^I$ such that 
$c_i \geq \overline{v}_i - \delta$ for every $i \in I$ and
\color{black}
\[ \prob_{\widehat \sigma}(\|f - c\|_\infty < \delta) > 0, \]
where $f=(f_i)_{i\in I}$. Since $f$ is Borel-measurable,
there is a history 
$h^* \in A^{t^*}$, for some stage $t^*\in\dN$,
such that 
\begin{equation*}
\prob_{h^*,\widehat \sigma}(\|f - c\|_\infty < \delta) > 1-\delta,
\end{equation*} 
where $\prob_{h^*,\widehat \sigma}$ is the conditional probability induced by $\widehat\sigma$ on the event that $h^*$ occurred.
Formally, it is the probability distribution over $A^\dN$ defined by the strategy profile that follows $h^*$ in the first $t^*$ stages, and then follows $\widehat\sigma$.
\black
Denote 
\[ D^* \,:=\, \left\{ p\in A^{\dN} \,\colon \|f(p) - c\|_\infty < \delta \right\}. \]
Since every probability measure on $A^\dN$ is inner-regular, 
there is a compact subset $D \subseteq D^*$ such that 
\begin{equation}
    \label{equ:d}
\prob_{h^*,\widehat \sigma}(D) \,\geq\, \prob_{h^*,\widehat \sigma}(D^*) - \delta \,>\, 1-2\delta. 
\end{equation} 
Since $D$ is compact, its complement $D^c$ is open, 
hence it is the union of basic open sets:
there is a minimal subset $H^* \subseteq H$ such that 
\[ D^c \,=\, \bigcup\,\{ C(h) \,\colon\, h \in H^*\}, \]
where $C(h)$ 
is the set of plays that extend $h$.
By Theorem~\ref{theorem:blame}, there exists a function $g : D^c \to I$ such that for every $i \in I$ and every $\sigma_i \in \Sigma_i$, 
\[ \prob_{h^*,\sigma_i,\widehat \sigma_{-i}}(D^c \hbox{ and } \{g \neq i\}) \,<\,\widehat\ep\, :=\,
2\sqrt{(|I|-1)\cdot 2 \delta}. \]

Consider the following strategy profile $\sigma^*$:
\begin{itemize}
\item 
In the first 
$t^*$
stages, 
the players follow $h^*$.
Any deviation from this history triggers an indefinite punishment at the minmax level; that is, if player~$i$ deviated from $h^*$, then the other players switch to the strategy profile $\sigma_{-i}^{h,i}$ (cf. \Eqref{pun}), which ensures that player~$i$'s payoff is at most $\overline v_i + \delta$,
where $h$ is the minimal history that is not a prefix of $h^*$.
\item
From stage $t^*+1$
and on, the players follow $\widehat \sigma_{h^*}$, 
i.e., the continuation of the strategy profile $\widehat \sigma$ after $h^*$.
\item
If the realized 
history at some stage
coincides with some history $h \in H^*$,
the players switch to a punishment strategy against player $g(h)$; that is, the players $I \setminus \{g(h)\}$ switch to the strategy profile $\sigma_{-g(h)}^{h,g(h)}$, 
which ensures that player~$g(h)$'s payoff is at most $\overline v_{g(h)} + \delta$. 
\end{itemize}

We argue that the strategy profile $\sigma^*$ is an $\ep$-equilibrium.
Indeed, 
by Eq.~\eqref{equ:d},
under $\sigma^*$,
the realized history coincides with some history in $H^*$ with probability at most 
$2\delta$.
Since the payoff at all plays in $D$ is $\delta$-close to $c$,
we obtain for each player $i\in I$ that 
\[ \E_{\sigma^*}[f_i] \,\geq\, c_i - 2 \delta\cdot(1 + 2\|f_i\|_\infty). \]
Fix now a player $i \in I$ and a strategy $\sigma_i \in \Sigma_i$.
Denote by $\mu$ the probability that under $(\sigma_i,\sigma^*_{-i})$ the realized history at some stage lies in $H^*$.
Since once the realized history $h$ lies in $H^*$ player $g(h)$ is punished,
since the probability that in this case 
$g(h) \neq i$ is at most $\widehat\ep$, 
and since $c_i \geq \overline v_i - \delta$, 
we have 
\begin{eqnarray*}
\E_{\sigma_i,\sigma^*}[f_i] &\leq& (1-\mu)(c_i + \delta) + \widehat\ep \|f_i\|_\infty + \max\{0,\mu-\widehat\ep\}\cdot (\overline{v}_i + \delta)\\
&\leq& c_i + 2\delta + 2\widehat\ep \|f_i\|_\infty.
\end{eqnarray*}
Hence, by \Eqref{choosing-delta} we have $\E_{\sigma_i,\sigma^*}[f_i]-\E_{\sigma^*}[f_i]\leq\ep$, which proves $\sigma^*$ is an $\ep$-equilibrium.
\end{proof}

\section{Concluding remarks}
We studied the existence of $\ep$-equilibrium in Blackwell games under the condition that the payoffs are tail-measurable. When the number of players is finite, this condition was only used to obtain that the minmax values of the players are independent of the history (cf. Lemma~\ref{indepvalue}). For countably infinitely many players, however, our proof technique  did require tail-measurability in its full generality (cf. Remark~\ref{count-remark}).

There are many interesting directions for further research. For example, to find conditions under which a Blackwell game with tail-measurable payoffs admits a 0-equilibrium, or to investigate the existence of $\ep$-equilibrium in Blackwell games without requiring that the payoffs are tail-measurable. A challenging but rather open direction is to investigate Blackwell games when there is an underlying state space (cf. Remark~\ref{rem-stoch}).

We hope that our paper will attract researchers to the study of the fascinating area of Blackwell games.


\begin{thebibliography}{}

\bibitem{AGHSS}
Alon N., Gunby B., He X., Shmaya E., and Solan E. (2022)
Identifying the deviator.
arXiv:2203.03744.

\bibitem{AKRS1}
Ashkenazi-Golan G., Flesch J., Predtetchinski A., and Solan E. (2022a) 
Existence of equilibria in repeated games with long-run payoffs. 
\emph{Proceedings of the National Academy of Sciences}, \textbf{119}(11), e2105867119.

\bibitem{AKRS2}
Ashkenazi-Golan G., Flesch J., Predtetchinski A., and Solan E. (2022b) 
Regularity of the minmax value and equilibria in multiplayer Blackwell games. 
arXiv preprint arXiv:2201.05148.

\bibitem{Blackwell69}
Blackwell D. (1969)
Infinite $G_\delta$-games with imperfect information,
\emph{Applicationes Mathematicae}, \textbf{10}, 99--101.

\bibitem{Blackwell96}
Blackwell D. and Diaconis P. (1996)
A non-measurable tail set. 
Lecture Notes - Monograph Series, \textbf{30}, 1--5.

\bibitem{Ummels}
Bouyer P., Brenguier R., Markey N., and Ummels M. (2015)
Pure Nash equilibria in concurrent deterministic games. 
\emph{Logical Methods in Computer Science}, \textbf{11}, 1--72.

\bibitem{Bruyere}
Bruy\`ere V., Le Roux S., Pauly A., and Raskin J.F. (2021)
On the existence of weak subgame perfect equilibria.
\emph{Information and Computation}, \textbf{276}, 104553.

\bibitem{Chatterjee2005}
Chatterjee K. (2005)
Two-player nonzero-sum $\omega$-regular games. 
In: International Conference on Concurrency Theory, pp. 413--427.
Springer.

\bibitem{Chatterjee12}
Chatterjee K. and Henzinger T.A. (2012) 
A survey of stochastic $\omega$-regular games. 
\emph{Journal of Computer and System Sciences}, \textbf{78}, 394--413.

\bibitem{Chatterjee2004}
Chatterjee K., Majumdar R., and Jurdzinski M. (2004)
On Nash equilibria in stochastic games.
In: CSL’04, 26-–40. LNCS 3210, Springer.

\bibitem{Flesch2021}
Flesch J., Herings P., Maes J., and Predtetchinski A. (2021)
Subgame maxmin strategies in zero-sum stochastic games with tolerance levels. 
\emph{Dynamic Games and Applications}, \textbf{11}, 704--737.

\bibitem{FP17}
Flesch J. and Predtetchinski A. (2017)
A characterization of subgame-perfect equilibrium plays in Borel games of perfect information,
\emph{Mathematics of Operations Research}, \textbf{42}, 1162--1179.

\bibitem{FleschSolan22}
Flesch J. and Solan E. (2022)
Equilibrium in two-player stochastic games with shift-invariant payoffs.
ArXiv preprint arXiv:2203.14492.

\bibitem{GS}
Gale D. and Stewart F.M. (1953)
Infinite games with perfect information,
\emph{Contributions to the Theory of Games}, \textbf{2}, 245--266.

\bibitem{Gimbert08}
Gimbert H. and Horn F. (2008)
Optimal strategies in perfect-information
stochastic games with tail winning conditions. arXiv preprint
arXiv:0811.3978.

\bibitem{Jaskiewitz}
Ja\'{s}kiewicz A. and Nowak A.S. (2018) 
Non-zero-sum stochastic games.
Handbook of Dynamic Game Theory, 1--64.

\bibitem{LRP14}
Le Roux S. and Pauly A. (2014)
Infinite sequential games with real-valued payoffs.
In Proceedings of the joint meeting of the twenty-third EACSL annual conference on computer science logic (CSL)
and the twenty-ninth annual ACM/IEEE symposium on logic in computer science (LICS), 1-10.

\bibitem{Maitra93}
Maitra A. and Sudderth W. (1993)
Borel stochastic games with lim sup payoff. 
\emph{The Annals of Probability}, 861--885.

\bibitem{Martin75}
Martin D.A. (1975)
Borel determinacy,
\emph{Annals of Mathematics}, \textbf{102}, 363--371.

\bibitem{Martin98}
Martin D.A. (1998)
The determinacy of Blackwell games,
\emph{The Journal of Symbolic Logic}, \textbf{63}, 1565--1581.

\bibitem{Mashiah15}
Mashiah-Yaakovi A. (2015) 
Correlated equilibria in stochastic games with Borel measurable payoffs. 
\emph{Dynamic Games and Applications}, \textbf{5}, 120--135.

\bibitem{Mertens87}
Mertens J.F. (1987) Repeated games,
\emph{Proceedings of the International Congress of Mathematicians}, 1986
Providence, RI. American Mathematical Society.

\bibitem{Rosenthal75}
Rosenthal J. (1975)
Nonmeasurable invariant sets.
\emph{The American Mathematical Monthly}, \textbf{82.5}, 488--491.

\bibitem{Secchi2002}
Secchi P. and Sudderth W.D. (2002)
Stay-in-a-set games, 
\emph{International Journal of Game Theory}, \textbf{30}, 479--490.

\bibitem{Shapley53}
Shapley L.S. (1953)
Stochastic games. 
\emph{Proceedings of the National Academy of Sciences}, \textbf{39}, 1095--1100.

\bibitem{Solan2018}
Solan E. (2018)
Acceptable strategy profiles in stochastic games.
\emph{Games and Economic Behavior}, \textbf{108}, 523--540.

\bibitem{Sorin92}
Sorin S. (1992) 
Repeated games with complete information. 
Handbook of Game Theory with Economic Applications, 1, 71--107.

\bibitem{Vieille-a} 
Vieille N. (2000a) Two-player stochastic games I: A reduction,
\emph{Israel Journal of Mathematics}, \textbf{119}, 55--91.

\bibitem{Vieille-b}
Vieille N. (2000b): Two-player stochastic games II: The case of recursive games,
\emph{Israel Journal of Mathematics}, \textbf{119}, 93--126.

\end{thebibliography}
\end{document}